\DeclareMathSymbol{\shortminus}{\mathbin}{AMSa}{"39}
\def\pgfsys@patternmatrix{1.0 0.0 0.0 1.0 0.0 0.0}
\def\pgfsys@declarepattern#1#2#3#4#5#6#7#8#9{%
  % Start building the pattern dictionary:
  \pgf@xa=#2\relax%
  \pgf@ya=#3\relax% 
  \pgf@xb=#4\relax%
  \pgf@yb=#5\relax%
  \pgf@xc=#6\relax%
  \pgf@yc=#7\relax%
  \pgf@sys@bp@correct\pgf@xa%
  \pgf@sys@bp@correct\pgf@ya%
  \pgf@sys@bp@correct\pgf@xb%
  \pgf@sys@bp@correct\pgf@yb%
  \pgf@sys@bp@correct\pgf@xc%
  \pgf@sys@bp@correct\pgf@yc%
  % Now create the pattern object:
  \immediate\pdfobj stream
  attr
  {
    /Type /Pattern
    /PatternType 1
    /PaintType \ifnum#9=0 2 \else 1 \fi
    /TilingType 1
    /BBox [\pgf@sys@tonumber\pgf@xa\space\pgf@sys@tonumber\pgf@ya\space\pgf@sys@tonumber\pgf@xb\space\pgf@sys@tonumber\pgf@yb]
    /XStep \pgf@sys@tonumber\pgf@xc\space
    /YStep \pgf@sys@tonumber\pgf@yc\space
    /Matrix [\pgfsys@patternmatrix]
    /Resources << >> %<<
  }
  {#8}% 
  \pgfutil@addpdfresource@patterns{/pgfpat#1\space \the\pdflastobj\space 0 R}%
}
\def\pgf@sp{ }%
\def\pgftransformextractmatrix#1#2{%
\begingroup%
\pgftransformreset%
#2%
\xdef\pgf@tmp{\pgf@pt@aa\pgf@sp\pgf@pt@ab\pgf@sp\pgf@pt@ba\pgf@sp\pgf@pt@bb\pgf@sp\pgf@sys@tonumber\pgf@pt@x\pgf@sp\pgf@sys@tonumber\pgf@pt@y}%
\endgroup%
\let#1=\pgf@tmp}
\tikzset{%
  pattern angle/.code={%
    \pgfmathparse{#1}\let\patternangle=\pgfmathresult
    \pgftransformextractmatrix\pgfsys@patternmatrix{\pgftransformrotate{\patternangle}}%
  },
  pattern angle=0
}
\newlength{\hatchspread}
\newlength{\hatchthickness}
\newlength{\hatchshift}
\newcommand{\hatchcolor}{}
\tikzset{hatchspread/.code={\setlength{\hatchspread}{#1}},
         hatchthickness/.code={\setlength{\hatchthickness}{#1}},
         hatchshift/.code={\setlength{\hatchshift}{#1}},% must be >= 0
         hatchcolor/.code={\renewcommand{\hatchcolor}{#1}}}
\tikzset{hatchspread=3pt,
         hatchthickness=0.4pt,
         hatchshift=0pt,% must be >= 0
         hatchcolor=black}
\definecolor{shade1}{rgb}{.2, .2, .2}
\definecolor{shade2}{rgb}{.5, .5, .5}
\definecolor{shade3}{rgb}{.8, .8, .8}
\definecolor{shade4}{rgb}{.95, .95, .95}
\tikzstyle arrowstyle=[scale=1]
\tikzstyle directed=[postaction={decorate,decoration={markings,
    mark=at position .65 with {\arrow[arrowstyle]{stealth}}}}]
\tikzstyle reverse directed=[postaction={decorate,decoration={markings,
    mark=at position .65 with {\arrowreversed[arrowstyle]{stealth};}}}]
\def\w{\omega}
\def\e{\varepsilon}
\def\Q{\mathbb{Q}} 
\def\Sb1{\mathbb{S}^1}
\def\T{\mathbb{T}}
\def\Z{\mathbb{Z}} 
\def\R{\mathbb{R}}
\def\i{{i,j}}
\def\S{S}
\def\T{T}
\newtheorem*{theorem*}{Theorem}
\newtheorem{theorem}{Theorem}
\newtheorem{lemma}[theorem]{Lemma}
\newtheorem{proposition}[theorem]{Proposition}
\newtheorem{corollary}[theorem]{Corollary}%[section]
\DeclareMathSymbol{\varnothing}{\mathord}{AMSb}{"3F} 
\begin{document}
\renewenvironment{proof}{\noindent {\bf Proof.}}{ \hfill\qed\\ }
\newenvironment{proofof}[1]{\noindent {\bf Proof of #1.}}{ \hfill\qed\\ }

\title{Chess billiards}

\def\curraddrname{{\itshape Address}}
\author{Arnaldo Nogueira}

\author{Serge Troubetzkoy}
\address{Aix Marseille Univ, CNRS, Centrale Marseille, I2M,  Marseille, France}

  \curraddr{I2M, Luminy\\ Case 907\\ F-13288 Marseille CEDEX 9\\ France}

 \email{arnaldo.nogueira@univ-amu.fr}
 \email{serge.troubetzkoy@univ-amu.fr}
 
 \date{\today}
 
 \thanks{
 The project leading to this publication has received funding from Excellence Initiative of Aix-Marseille University - A*MIDEX and Excellence Laboratory Archimedes LabEx (ANR-11-LABX-0033), French "Investissements d'Avenir" programmes.  The first author A.N. thanks the program CEFIPRA project No.\  5801-1/2017 for their support.}
\begin{abstract}  
We show the chess billiard map, which was introduced in \cite{HM} in order to study a generalization of the $n$-Queens problem in chess, is a circle homeomorphism.
 We give a survey of some of the known results on circle homeomorphisms, and apply them to this map.
 We prove a number of new results which  give  answers to some of the open questions posed in \cite{HM}.
\end{abstract} 
\maketitle

\section{Introduction}

The classical $n$-Queens problem asks in how many different ways $n$ mutually non-attacking queens can be placed on an $n \times n$ chess board.
In \cite{HM} Hanusa and Manhankali studied a generalization of this problem, and they introduced a dynamical system which they describe as ``reminiscent of billiards'' to aid their
study.  We will call this dynamical system the chess billiard, and in this article we will study it from a purely dynamical point of view.

The chess billiard is defined as follows, consider a convex planar domain $P$ and fix a direction $\theta_1$.
Foliating the plane by lines in this direction yields an involution of $T_1$ of the boundary $\partial P$ as follows: 
each line intersects $\partial P$ in either no point, one point, two points, or a segment.  In the cases when the  
intersection is non-empty, we define respective the map $T_1$ to be the identity, to exchange the two points, or 
to be the central symmetry of the segment 
about its center.  The chess billiard map is then the convolution of two such involutions defined by a  pair
of directions
$(\theta_1,\theta_2)$.
The chess billiard map turns out to be a circle homeomorphism.

The chess billiard map  was already introduced in various other articles
(without reference to chess),
Arnold mentions this map as his motivation for the study of KAM theory \cite{A}. The map was also studied in a special case   by Khmelev \cite{Kh}  (see Section \ref{secK}).

Our  article  has two purposes.
We first  collect  various well known results on circle homeomorphisms  and apply them to the chess billiard map
to deduce certain interesting results, in particular answering some questions posted in \cite{HM}.
Then we go on to prove new results about the chess billiard.  We prove some general results on periodic points, then 
we turn to the study of the chess billiard in a polygon. In particular we prove results about the chess billiard map in triangles, in centrally symmetric domains and in the square. Our results on the square are undoubtedly
the most interesting of the article.

The ultimate goal of the study of chess billiards is to understand  
 for which convex domains  and  directions the rotation number is rational and for which it is irrational.

\subsection{Structure of the article}
In Section \ref{sec-1.1} we give the formal definition of the chess billiard map and show that it is a circle homeomorphism
(Proposition \ref{prop-prop1})
and apply the theory of circle homeomorphisms, due to  Poncar\'e and Denjoy to them (Corollary \ref{cor-rotation}). 
Then in Section \ref{sec2} we go on to illustrate
this by analyzing the easiest case, the chess billiard map in the circle.
In Section \ref{sec3} we give a necessary and
sufficient condition for $\S$ to have a fixed point (Proposition \ref{prop-fix}), this allows us to understand
fixed points in strictly convex domains (Corollary \ref{cor1}) and to show that the chess billiard map is
purely periodic in an arbitrary triangle (Corollary \ref{cor2}). 
In this section we also  show that in a strictly convex domain the rotation number of the chess billiard
map achieves all values in $[0,1)$ (Theorem \ref{thm-all}).
In Section \ref{sec-square} we study the behavior of the chess billiard map in the square giving some
elements of an answer to questions 7.5 and 7.6 of \cite{HM}.
 We show that there exist directions for which the chess billiard map has no periodic orbit (Theorem \ref{thm-irrat}) and that
 for an
open dense set of directions $(\theta_1,\theta_2)$ the chess billiard has a periodic orbit (Theorem \ref{thm-sq}), however the set of directions which have neutral periodic orbits (i.e., are ``treacheries'' in the language of \cite{HM})
is small (Proposition \ref{prop-tre}).
We also give a necessary and sufficient condition
for  a vertex to be a periodic  point (Proposition \ref{prop-connection1})
and we give a sufficient condition
for the existence of neutral periodic orbits (Proposition \ref{prop-connection}). 
Finally in the short Section \ref{sec6} we study centrally symmetric domains.
Our results  give complete or partial answers to several questions raised in \cite{HM}.

\section{The chess billiard is a circle homeomorphism in disguise}\label{sec-1.1}

We begin by describing the chess billiard slightly more formally.
Consider a strictly convex planar domain $P$, and a foliation of $P$ by a family of nonoriented parallel lines (see Figure \ref{fig-circle}). 
The choice of families of lines is parametrized by $\theta \in  [0,\pi)$, i.e.,  the projective line. In some of
our arguments 
it will be convenient to parametrize $\theta$ by $[0,2\pi)$, this will be clear from the context. 
Since $P$ is convex we can parametrize its boundary $\partial P$ by arc length.
Since $P$ is strictly convex this foliation in direction $\theta_i$  induces a bijection ${T}_i : \partial P \to \partial P$ which is an orientation reversing homeomorphism with two fixed points. The boundary of a convex set is always rectifiable,
throughout the article we will express ${T}_i$ in the arc-length parametrization of $\partial P$, points
in $\partial P$ will be denoted $x$ or $y$.

\begin{figure}[h]
\begin{tikzpicture}[]
\draw[pattern=custom north west lines,hatchspread=7.627pt,hatchthickness=0.6pt]  (0,0) circle (50pt);
\draw [fill] (-1.25,1.25) circle [radius=2pt];
\draw [fill] (1.25,-1.25) circle [radius=2pt];
\draw [red,dashed,thick] (1.25,-1.25) -- (-1.25,1.25);

\node at (-1.4,1.4) {\tiny $x$};
\node at (1.5,-1.5) {\tiny ${T}_1x$};
\end{tikzpicture}
\caption{The circle foliated by a family of parallel lines, and the induced bijection.}\label{fig-circle}
\end{figure}
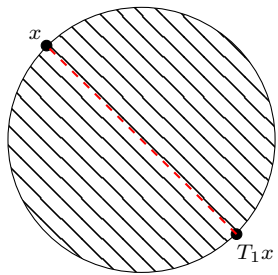

Let $Q := \partial P \times \{1,2\} =: Q_1 \cup Q_2$.  We fix the same orientation on   $Q_1$ and $Q_2$. 

Consider a pair of directions $(\theta_1,\theta_2)$, and the associated bijections 
${T}_i : Q_i \to Q_i$. The pair $(\theta_1,\theta_2)$ as well as each of its components will be referred to as
a direction.
Let  
\begin{equation*}
T=  \T_{\theta_1,\theta_2}: Q \to Q \ \  \text{ be defined by} \ \   T(x,i) = ({T}_{i}(x),i+1 \ (\text{mod } {2})).
\end{equation*}
The map ${T}$ is an orientation reversing homeomorphism.
We define the {\it chess billiard map} 
\begin{equation*}\label{def-map}
 S = \S_{\theta_1,\theta_2}: Q \to Q \ \  \text{by} \ \   S:= T^2.
\end{equation*}
The map $\T$ is an orientation reversing homeomorphism since
it is the composition of an orientation preserving homeomorphism $ F(i) := i+1  \ (\text{mod } {2})$ with an orientation reversing homeomorphism $T_i$. From here on we will not write the $\text{mod } 2$ when referring to the two copies of $Q$.
A degenerate case is when these two families coincide. 

The first proposition summarizes   several immediate properties of the map $S$.

\begin{proposition}\label{prop-prop1} Consider a convex domain $P$ and arbitrary $\theta_1,\theta_2$.
\begin{enumerate}
\item For each $i \in \{1,2\}$ the map $\S |_{Q_i}$ is an orientation preserving circle homeomorphism.
\item The sum of the rotation numbers of $\S |_{Q_1}$ and of $\S |_{Q_2}$ is $1$.
\item The map $\S$ is not topologically mixing. 
\end{enumerate}
\end{proposition}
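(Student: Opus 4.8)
The plan is to handle the three assertions separately, using only the structural facts already recorded: $T$ is an orientation-reversing homeomorphism of $Q$, and by its very definition $T(x,i)=(T_i(x),i+1)$ it interchanges the two circles, $T(Q_1)=Q_2$ and $T(Q_2)=Q_1$. I also use that $Q_i=\partial P$ is a Jordan curve, hence a topological circle, so that ``circle homeomorphism'' is meaningful.

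For part (1) I would first note that each of the restrictions $T|_{Q_1}\colon Q_1\to Q_2$ and $T|_{Q_2}\colon Q_2\to Q_1$ is an orientation-reversing homeomorphism between circles: it is $T_i$ read on $\partial P$, which is orientation reversing, and the chosen orientations on the two copies agree. Since $T$ interchanges $Q_1$ and $Q_2$, the map $S=T^2$ preserves each $Q_i$, and $S|_{Q_i}$ is then a composition of two orientation-reversing circle homeomorphisms, hence an orientation-preserving homeomorphism of the circle $Q_i$. This step is pure bookkeeping and presents no difficulty.

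For part (2) the key observation is that $\varphi:=T|_{Q_1}\colon Q_1\to Q_2$ conjugates $S|_{Q_1}$ to $S|_{Q_2}$: since $T$ commutes with $T^2$ one has $\varphi\circ(S|_{Q_1})\circ\varphi^{-1}=T\circ T^2\circ T^{-1}=T^2$, and tracking the domains ($\varphi^{-1}$ sends $Q_2$ to $Q_1$, then $T^2$ stays in $Q_1$, then $\varphi$ returns to $Q_2$) identifies this with the self-map $S|_{Q_2}$ of $Q_2$. Now $\varphi$ is orientation reversing, and an orientation-reversing conjugacy negates the rotation number modulo $1$; I would justify this by a one-line lift computation (choosing a lift $\tilde\varphi$ with $\tilde\varphi(x+1)=\tilde\varphi(x)-1$ and a lift $\tilde f$ of $S|_{Q_1}$, the map $\tilde\varphi\tilde f\tilde\varphi^{-1}$ is a lift of $S|_{Q_2}$ whose translation number is opposite to that of $\tilde f$). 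Hence $\rho(S|_{Q_1})+\rho(S|_{Q_2})\equiv 0\pmod 1$; with both rotation numbers normalized to lie in $[0,1)$ this sum equals $1$ as soon as $S$ has no fixed point, which is the situation intended by the statement (and in $\R/\Z$, where $0$ and $1$ agree, the identity is simply $\rho_1+\rho_2=0$). The only point that requires care here is this orientation-reversing conjugacy formula together with the $[0,1)$ normalization.

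Part (3), which I expect to be the easiest, follows directly from disconnectedness: $S$ fixes each of the disjoint clopen pieces $Q_1$ and $Q_2$ setwise, so for nonempty open sets $U\subseteq Q_1$ and $V\subseteq Q_2$ we have $S^n(U)\subseteq Q_1$ for every $n$, whence $S^n(U)\cap V=\varnothing$ for all $n$. This already defeats topological transitivity, and a fortiori topological mixing. There is no obstacle here; the only thing worth a sentence is recording which formulation of ``topologically mixing'' is being used.
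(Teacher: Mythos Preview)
Your argument is correct and essentially follows the paper's approach. For (1) and (3) you do exactly what the paper does (composition of two orientation-reversing maps; invariance of the two clopen pieces). For (2) there is a small cosmetic difference: the paper conjugates by the orientation-\emph{preserving} flip $F:(x,i)\mapsto(x,i+1)$ and uses $F S^n = S^{-n}F$ to identify $S|_{Q_1}$ with $(S|_{Q_2})^{-1}$, whereas you conjugate by the orientation-\emph{reversing} map $T|_{Q_1}$ to identify $S|_{Q_1}$ with $S|_{Q_2}$ itself; either way one lands on $\rho_1+\rho_2\equiv 0\pmod 1$, and your remark about the $[0,1)$ normalization is appropriate.
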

Item (1) allows us to slightly misuse terminology and to refer to $S$ as a circle homeomorphism.
Item (2) allows us to slightly abuse the definition of rotation number and to refer to the rotation number $\rho(\S)$ of $\S$.

\begin{proof}
Item (1) is immediate since $S$ is the composition of   an orientation reversing homeomorphism $T$ with itself.
Item (2) follows immediately from the commutation relation $F S^n(x,i)  = S^{-n} F (x,i)$.
Item (3) follows by the construction, $\T(Q_i) = \T(Q_{i+1})$, and thus $\T$ is not topologically mixing.
\end{proof}

For completeness we give  the definition of rotation number introduced by Poincar\'e.
Consider the natural projection $\pi : \R \to Q_i$, it
provides a lift of $\S|_{Q_i}$ to a homeomorphism $\tilde{\S} : \R \to \R$ satisfying $\S \circ \pi = \pi \circ \tilde{\S}$. 
Consider $ \lim_{n \to \infty}  \frac{{\tilde S}^n(x) - x}{n}$, this limit always exists, and its value
does not depend on the point $x$ nor on the lift, thus we call it 
the {\em rotation number} $\rho(S)$  of ${S}$.

If $P$ is only convex,
the exact same definition works
 in the case that the direction of 
each interval in $\partial P$ is transverse to the two foliations; for example $P$ is a convex
polygon and neither of the two foliations is parallel to a side of $P$.
In this case we will call the direction $(\theta_1,\theta_2)$ \textit{exceptional} if either $\theta_1$ or $\theta_2$ is parallel 
to a side of $P$.
We can extend the definition to the exceptional case in the following way: we  
 define $T_i$ on any line segment in $Q_i$ to be the central symmetry with respect to the center of 
this segment  (see Figure \ref{fig-square}).  Again, the map $T$ is  an orientation reversing homeomorphism.
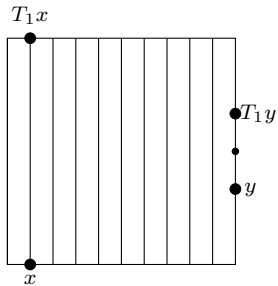
\begin{figure}[h]
\begin{tikzpicture}[scale=1]

\draw[] (4,0) rectangle + (3,3);
\foreach \i in {0,0.3,...,3}
\draw[] (4 + \i, 0) --(4 + \i,3);
\draw [fill] (4.3,0) circle [radius=2pt];
\draw [fill] (4.3,3) circle [radius=2pt];

\node at (4.3,-0.2) {\tiny $x$};
\node at (4.3,3.3){\tiny $T_1x$};

\draw [fill] (7,1) circle [radius=2pt];
\draw [fill] (7,2) circle [radius=2pt];

\node at (7.2,1) {\tiny $y$};
\node at (7.3,2){\tiny $T_1y$};
\draw [fill] (7,1.5) circle [radius=1.2pt];
\end{tikzpicture}
\caption{The square foliated by a family of parallel lines which are parallel to a side, and the induced bijection.}\label{fig-square}
\end{figure}

Our chess billiard map is  defined on all of $Q$, while the map  in \cite{HM} is  defined on
$\partial P \setminus \{ \text{the corners of } P\}$; the definitions agree where they are both defined.
Furthermore in \cite{HM}  the authors only consider points in $Q_i$ for which the
direction $\theta_i$ points towards the interior of $P$, while we allow  foliations  tangent to an interval in $\partial P$.

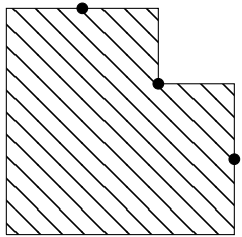
\begin{figure}[h]
\begin{tikzpicture}[]
\draw[pattern=custom north west lines,hatchspread=8.33pt,hatchthickness=0.6pt]  (0,0) -- (3,0) -- (3,2)-- (2,2) -- (2,3) -- (0,3) -- (0,0);
\draw [fill] (3,1) circle [radius=2pt];
\draw [fill] (1,3) circle [radius=2pt];
\draw [fill] (2,2) circle [radius=2pt];
\end{tikzpicture}
\caption{Any possible definition of the dynamics at the marked points will lead to a discontinuous map.}\label{fig.nonconvex}
\end{figure}

If $P$ is not convex, then for certain directions we have orbits which graze the boundary, any possible definition
of the dynamical system will lead to a discontinuous map  (see Figure \ref{fig.nonconvex}).
None the less, we can define a piecewise continuous chess map,
for example by defining the map by one sided continuity at these points (answering part of Question 7.8 of \cite{HM}).
Such a definition leaves our nice framework, and thus in this article we will not consider such domains, 
although their study is certainly interesting. In the case of a non-convex polygon, the resulting map is an 
affine interval exchange transformation, which
have been studied for the last decade or so, see \cite{BFG} and the references therein.

The study of the dynamics of orientation preserving circle homeomorphisms was initiated by Poincar\'e and developed by Denjoy, Herman, 
and others.  Some of the main results are summarized in 
the next corollary.  For the definitions and proof of the corollary
see for example \cite{KH}[Propositions 11.2.2 and 11.2.5, Theorem 11.2.9].

\begin{corollary}\label{cor-rotation}
If the rotation number of the chess billiard map $\S$ is rational, in reduced form $p/q$, then $\S$ has a periodic orbit, and all periodic orbits of $\S$ have period $q$.
If $\S$ has exactly one periodic orbit then every other point is heteroclinic under $\S^{q}$ to two points on the periodic orbit.  
These points are different if the period is greater than 1.  If $\S$ has more than one periodic orbit, then each nonperiodic point 
is heteroclinic under $\S^{q}$ to two points on different periodic orbits. 

Fix $i \in \{1,2\}$. If the rotation number of $\S$ is irrational then 
a) the $\omega$-limit set $\omega(x)$ is independent of $x \in Q_i$ and either $\w(x) =Q_i$ or it is a perfect and nowhere dense subset of $Q_i$, and
b) the map $\S$  is uniquely ergodic.
\end{corollary}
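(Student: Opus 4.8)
The plan is to reduce every assertion to the classical Poincar\'e--Denjoy--Herman theory of orientation preserving circle homeomorphisms collected in \cite{KH}, and then to add the little bit of bookkeeping needed to pass between the two copies $Q_{1}$ and $Q_{2}$. Fix $i\in\{1,2\}$: by Proposition \ref{prop-prop1}(1) the map $\S|_{Q_{i}}$ is an orientation preserving circle homeomorphism, so the cited statements apply to it verbatim. By Proposition \ref{prop-prop1}(2) we have $\rho(\S|_{Q_{1}})+\rho(\S|_{Q_{2}})=1$, and since $\gcd(p,q)=1$ exactly when $\gcd(q-p,q)=1$, the rotation number of $\S|_{Q_{1}}$ is rational precisely when that of $\S|_{Q_{2}}$ is, with the same reduced denominator $q$. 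Hence ``$\rho(\S)=p/q$'' and ``period $q$'' are unambiguous, and it suffices to prove each statement on a single copy $Q_{i}$.

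Assume first that $\rho(\S)=p/q$ is rational. Poincar\'e's theorem (\cite{KH}[Proposition 11.2.2]) produces a periodic orbit and forces every periodic orbit of $\S|_{Q_{i}}$ to have period exactly $q$. Consider the nonempty closed $\S$-invariant set $\mathrm{Fix}(\S^{q}|_{Q_{i}})$: its complementary open arcs (``gaps'') are permuted by $\S$ respecting the cyclic order, while $\S^{q}$ fixes each endpoint and maps each gap onto itself with no interior fixed point, hence acts on the closed gap as an increasing homeomorphism of an interval fixing only its endpoints. Consequently each point of a gap is heteroclinic under $\S^{q}$ to the two endpoints of that gap, which is the claimed heteroclinic description; it remains to pin down which endpoints coincide and which periodic orbits they lie on. If there is a single periodic orbit, $\mathrm{Fix}(\S^{q}|_{Q_{i}})$ is exactly that orbit: when $q=1$ the unique gap is the complement of one point and the two ``endpoints'' coincide, whereas when $q>1$ the set consists of $q\ge 2$ distinct points and the endpoints of each gap are distinct. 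If there are $k\ge 2$ periodic orbits, one uses that $\S$ acts on the finite cyclically ordered set $\mathrm{Fix}(\S^{q}|_{Q_{i}})$ as an orientation preserving bijection of rotation number $p/q$: writing its points cyclically as $q_{1},\dots,q_{kq}$ one gets $\S(q_{j})=q_{j+kp}$ (indices taken mod $kq$), so the periodic orbit through $q_{j}$ is $\{q_{\ell}:\ell\equiv j \pmod{k}\}$, and two consecutive endpoints $q_{j},q_{j+1}$ lie on different orbits because $k\ge 2$. (The case of infinitely many periodic orbits is handled the same way using the action of $\S$ on the cyclically ordered set of gaps.)

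Assume now that $\rho(\S)$ is irrational. Poincar\'e's semiconjugacy together with Denjoy's theorem (\cite{KH}[Proposition 11.2.5 and Theorem 11.2.9]) give a monotone degree one map semiconjugating $\S|_{Q_{i}}$ onto the rigid rotation by $\rho(\S)$; consequently $\S|_{Q_{i}}$ has a unique minimal set $K\subseteq Q_{i}$, which is either all of $Q_{i}$ or a perfect nowhere dense set, and $\w(x)=K$ for every $x\in Q_{i}$, which is (a). For (b), transporting Lebesgue measure through the semiconjugacy yields an $\S$-invariant Borel probability measure supported on $K$; since each gap of $K$ is a wandering arc, any $\S$-invariant probability measure gives zero mass to the gaps and is therefore supported on $K$, and injectivity of the semiconjugacy off a countable subset of $K$ together with unique ergodicity of the irrational rotation forces this invariant measure to be unique. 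Hence $\S|_{Q_{i}}$ is uniquely ergodic.

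The only steps that go beyond a direct citation of \cite{KH} are the two distinctness statements in the rational case, and I expect the (mild) crux to be precisely the combinatorial observation that $\S$ restricts to a ``rotation by $p/q$'' on the cyclically ordered set $\mathrm{Fix}(\S^{q})$, equivalently on the cyclically ordered set of its gaps; granted this, which heteroclinic endpoints coincide and which periodic orbits they lie on can be read off immediately, and everything else follows from the quoted results applied on each copy $Q_{i}$.
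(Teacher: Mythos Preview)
Your proposal is correct and follows exactly the paper's approach: the paper does not give an independent proof of this corollary at all, but simply refers the reader to \cite{KH}[Propositions 11.2.2 and 11.2.5, Theorem 11.2.9], relying on Proposition \ref{prop-prop1} to identify $\S|_{Q_i}$ with an orientation preserving circle homeomorphism. You do precisely this, and in addition you spell out the combinatorics behind the two distinctness claims (endpoints of a gap are distinct when $q>1$; endpoints lie on different periodic orbits when there is more than one), which the paper leaves entirely to the citation.
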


This corollary immediately gives answers to several of the questions posed in \cite{HM}:

\begin{itemize}
\item[7.1, 7.2:]  if we interpret the words ``predictable behavior'' as having zero topological entropy 
then the answer is that the chess billiard is always predictable since  circle homeomorphisms always have
zero entropy. 

\item[7.3:] for any (polygonal) board the directions which have a periodic orbit are those for which the rotation number
is rational.

\item[7.4:] the map is ergodic if and only if its rotation number is irrational (and since it is then uniquely ergodic this does not
depend on the starting point as asked in \cite{HM}).

\item[7.13:] all close points have the same behavior
in the sense of the corollary.  
\end{itemize}

Questions 7.3 and 7.4  are
quite general and the answers give quite a bit of information but are not definitive.

\section{The circle}\label{sec2}

The simplest case of chess billiards is that of the circle. 

\begin{proposition}
For the circle, the map $\S|_{Q_1}$ is the rotation of the circle by angle $2\alpha$ where $\alpha = \theta_2 - \theta_1$ (and $S|_{Q_2}$ is the rotation by angle $2\pi - 2\alpha$).
\end{proposition}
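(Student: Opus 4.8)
The plan is to work with the circle $\partial P = \mathbb{S}^1$ realized as the unit circle, parametrized by arc length, and to compute the involution $T_i$ explicitly. Fix the family of parallel chords in direction $\theta_i$; a chord in this direction meets the circle in two points that are symmetric with respect to the diameter orthogonal to $\theta_i$, i.e.\ the diameter in direction $\theta_i + \pi/2$. Hence $T_i$ is precisely the reflection of $\mathbb{S}^1$ across the line through the origin in direction $\theta_i + \pi/2$ (equivalently, across the line in direction $\theta_i$, since a reflection of the circle across a diameter is the same as reflection across the orthogonal diameter composed with the antipodal map — I will just pin down one normalization and stick with it). The two fixed points of $T_i$ are the two endpoints of that diameter, consistent with the earlier discussion.

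Next I would recall the elementary fact that a reflection of the circle is, in the angular coordinate $\phi \in \mathbb{R}/2\pi\mathbb{Z}$, a map of the form $\phi \mapsto c_i - \phi$ for a constant $c_i$ depending only on $\theta_i$ (with $c_i = 2\theta_i$ in the normalization where reflection is across the diameter in direction $\theta_i$). Then $T = T_2 \circ T_1$ acts on $Q_1$ by $\phi \mapsto c_2 - (c_1 - \phi) = \phi + (c_2 - c_1)$, a rotation by $c_2 - c_1 = 2(\theta_2 - \theta_1) = 2\alpha$. But $S|_{Q_1} = T^2|_{Q_1}$ would then be rotation by $4\alpha$, so I need to be careful about which composition $T^2$ restricts to on $Q_1$: since $T$ swaps the two copies, $T^2|_{Q_1} = (T_2 \circ T_1)|_{Q_1}$ is already the composition of two reflections, giving rotation by $2\alpha$ as claimed — I will state this bookkeeping cleanly so the factor of $2$ is transparent. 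For $S|_{Q_2}$ one gets the reverse composition $T_1 \circ T_2$, i.e.\ rotation by $-2\alpha = 2\pi - 2\alpha$, which also matches item (2) of Proposition~\ref{prop-prop1} as a sanity check.

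The only genuine content is verifying that $T_i$ is the claimed reflection, which is the classical statement that the locus of midpoints of parallel chords of a circle is a diameter and that the two intersection points of such a chord with the circle are mirror images across the perpendicular diameter; in arc-length coordinates on the unit circle this reflection is affine of slope $-1$. This is the step I would treat most carefully, though it is still elementary plane geometry. Once $T_i(\phi) = c_i - \phi$ is established, everything else is the one-line composition computation above, together with the observation that $\alpha = \theta_2 - \theta_1$ is well defined modulo $\pi$ while the rotation angle $2\alpha$ is well defined modulo $2\pi$, consistently with the parametrization conventions fixed in Section~\ref{sec-1.1}.
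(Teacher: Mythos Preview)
Your proposal is correct and follows essentially the same approach as the paper: identify each $T_i$ as a reflection of the circle (the paper writes this in complex form as $z\mapsto \bar z$ after normalizing $\theta_1=\pi/2$, you write it in angular form as $\phi\mapsto c_i-\phi$), then compose two reflections to obtain a rotation by twice the angle between their axes. The only thing to tidy up is the parenthetical about the axis of reflection: $T_i$ is reflection across the diameter \emph{perpendicular} to $\theta_i$, not across the diameter in direction $\theta_i$, so $c_i=2\theta_i+\pi$ rather than $2\theta_i$; this is harmless since only the difference $c_2-c_1=2\alpha$ enters, but the ``equivalently'' in your aside is not literally true and should be dropped.
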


\begin{corollary}
If $\frac{\theta_2 - \theta_1}{\pi} = p/q \in \Q$ with $p \ge 0$, $q \ge 1$, $pgcd(p,q) = 1$, then all orbits are periodic with period $q$,
otherwise the map $\S$ is minimal and uniquely ergodic with respect to  the  length measure.
\end{corollary}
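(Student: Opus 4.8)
The plan is to read everything off from the proposition just stated, together with the classical description of rigid rotations of the circle. I would first fix the normalisation: after rescaling the arc-length parametrisation of $\partial P$ so that $\partial P$ has total length $1$, identify $Q_1$ (and likewise $Q_2$) with $\mathbb{R}/\mathbb{Z}$. In this coordinate, rotation of $\partial P$ by the geometric angle $2\alpha$ is exactly the rigid rotation $R_\beta\colon t\mapsto t+\beta \pmod 1$ with
\[
\beta \;=\; \frac{2\alpha}{2\pi} \;=\; \frac{\theta_2-\theta_1}{\pi} \pmod 1 ,
\]
so the preceding proposition says precisely that $S|_{Q_1}=R_\beta$ and $S|_{Q_2}=R_{1-\beta}$ in the normalised coordinate; in particular $\rho(S)=\beta$, consistently with Proposition \ref{prop-prop1}(2).

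Next I would split into the two cases of the statement. If $\beta=(\theta_2-\theta_1)/\pi=p/q$ in lowest terms, $p\ge 0$, $q\ge 1$, then $R_\beta^{\,q}=\mathrm{id}$ while $R_\beta^{\,k}\ne\mathrm{id}$ for $0<k<q$ because $\gcd(p,q)=1$, so every point of $Q_1$ is periodic of period exactly $q$ under $S$. On $Q_2$ the rotation number is $1-\beta=(q-p)/q$ with $\gcd(q-p,q)=\gcd(p,q)=1$, so every point of $Q_2$ is also periodic of period exactly $q$; hence all orbits of $S$ on $Q=Q_1\cup Q_2$ are periodic of period $q$. (The degenerate case $\theta_1=\theta_2$ is simply the subcase $p=0$, $q=1$, where $S$ is the identity.) If instead $\beta$ is irrational, then $1-\beta$ is irrational as well, so $R_\beta$ and $R_{1-\beta}$ are irrational rotations; classically (cf.\ Corollary \ref{cor-rotation}, the $\omega$-limit of a rigid irrational rotation being the whole circle) these are minimal and uniquely ergodic, with normalised Lebesgue measure as their unique invariant Borel probability measure. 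Pulling back through the rescaling conjugacy, $S|_{Q_1}$ and $S|_{Q_2}$ are minimal and uniquely ergodic with respect to length measure, and invoking the convention fixed after Proposition \ref{prop-prop1} that statements about the circle homeomorphism $S$ refer to either restriction $S|_{Q_i}$, this is the assertion of the corollary.

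There is essentially no hard step: the entire content is already in the proposition. The only points needing a moment's care are bookkeeping: converting the geometric angle $2\alpha$ into the fraction $\beta=(\theta_2-\theta_1)/\pi$ of the renormalised circle and reducing modulo $1$ (since $\theta_2-\theta_1\in(-\pi,\pi)$, one may have to pass to the representative in $[0,1)$), noting that $\beta$ and $1-\beta$ are rational, respectively irrational, together, and observing that $\gcd(q-p,q)=\gcd(p,q)$ so that the period on $Q_2$ matches the period on $Q_1$.
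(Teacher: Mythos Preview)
Your proposal is correct and follows the same approach as the paper: once the preceding proposition identifies $S|_{Q_i}$ as a rigid rotation by $\pm 2\alpha$, the dichotomy between rational and irrational $(\theta_2-\theta_1)/\pi$ is the classical one for circle rotations. The paper's written proof actually supplies the computation behind the proposition (the complex-coordinate calculation yielding $S(z,1)=(e^{-2i\alpha}z,1)$) and leaves the corollary itself as an immediate consequence; your write-up makes that consequence explicit, including the bookkeeping that the period on $Q_2$ matches that on $Q_1$, but there is no substantive difference in method.
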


\begin{proof}
By rotational symmetry the behavior of the chess billiard map in the circle depends only on $\alpha := \theta_2 -  \theta_1$.
It is convenient to use complex coordinates, $\mathbb{S}^1= \{z \in  \mathbb{C}: |z|=1\}$.
Suppose $\theta_1 = \pi/2$ then we have $T_1(z) = \bar z$.
To compute $T_2(z)$ we rotate to make $\theta_2$ vertical, take the complex conjugate, and then rotate back: 
$z \to e^{i \alpha} z \to  \overline{e^{i \alpha} z }   \to e^{-i \alpha} \overline{e^{i \alpha} z} = e^{- i 2 \alpha} \bar z.$
Thus 
\begin{align*}  \S(z,1) & = (T_2 \circ T_1(z), 1) = (e^{- i 2 \alpha} z,1) \text{ and }\\
 \S(z,2) & = (T_1 \circ T_2(z),2) = (e^{i 2 \alpha} z,2).  \qedhere
\end{align*}
\end{proof}

There are 4 special points, the fixed points $\pm 1$ of $T_1$ and  the fixed points $e^{\pm i \theta_2}$ of $T_2$. 
They play a special role in the case of rational rotation number,  if $\frac{\alpha}{\pi} = p/q$ with $pgcd(p,q)=1$ and $q$  even, then $\S^p(\pm i)= \mp i$ and $\S^p(\pm 1) = \mp 1$ (orbits shown in red and blue
in  Figure \ref{fig-even}, while a generic orbit is shown in black), 
while if $q$ is odd, then the orbit of $\pm1$ arrives at $\pm  e^{i \theta_2}$ and then returns to itself
(orbits show in red and blue in Figure \ref{fig-odd}).
The role of these orbits will be investigated in the general setting in Section \ref{secConn}.

\begin{figure}[h]
\begin{minipage}[ht]{.5\linewidth}
\centering
\begin{tikzpicture}[scale=1]
\draw[]  (0,0) circle (1.5cm);

\draw [fill,red] (-1.5,0) circle [radius=2pt];
\draw [fill,red] (1.5,0) circle [radius=2pt];
\draw[red] (-1.5,0) --  (1.5,0);
\node at (-2.3,0) {\color{red} \tiny $1= \S^2 (1)$};
\node at (2.3,0) {\color{red} \tiny ${^-1}= \S (1)$};

\draw[blue] (0,-1.5) --  (0,1.5);
\draw [fill,blue] (0,-1.5) circle [radius=2pt];
\draw [fill,blue] (0,1.5) circle [radius=2pt];
\node at (0, -1.8) {\color{blue} \tiny $-i= \S(i)$};
\node at (0, 1.8) {\color{blue} \tiny ${i}= \S^2(i)$};

\draw [fill] (-1.25,0.8) circle [radius=2pt];
\draw [fill] (1.25,0.8) circle [radius=2pt];
\draw [fill] (-1.25,-0.8) circle [radius=2pt];
\draw [fill] (1.25,-0.8) circle [radius=2pt];

\draw (-1.25,0.8) -- (-1.25,-0.8) -- (1.25, -0.8) -- (1.25,0.8) --  (-1.25,0.8);
\node at (-2,-0.8) {\tiny $x = \S^2(x)$};
\node at (2,-0.8) {\tiny $T_1(\T(x))$};
\node at (-2.1,0.8) {\tiny $T_1(x)$};
\node at (1.8,0.8) {\tiny $\S(x)$};
\node at (0,-2.5)  {$Q_1$, rotation number $1/2$};
\end{tikzpicture}
\end{minipage}\nolinebreak
\begin{minipage}[ht]{.5\linewidth}
\centering
\begin{tikzpicture}[scale=1]
\draw[]  (0,0) circle (1.5cm);

\draw [fill,red] (-1.5,0) circle [radius=2pt];
\draw [fill,red] (1.5,0) circle [radius=2pt];
\draw[red] (-1.5,0) --  (1.5,0);
\node at (-2.3,0) {\color{red} \tiny $1= \S^2 (1)$};
\node at (2.3,0) {\color{red} \tiny ${^-1}= \S (1)$};

\draw[blue] (0,-1.5) --  (0,1.5);
\draw [fill,blue] (0,-1.5) circle [radius=2pt];
\draw [fill,blue] (0,1.5) circle [radius=2pt];
\node at (0, -1.8) {\color{blue} \tiny $-i= \S(i)$};
\node at (0, 1.8) {\color{blue} \tiny ${i}= \S^2(i)$};

\draw [fill] (-1.25,0.8) circle [radius=2pt];
\draw [fill] (1.25,0.8) circle [radius=2pt];
\draw [fill] (-1.25,-0.8) circle [radius=2pt];
\draw [fill] (1.25,-0.8) circle [radius=2pt];

\draw (-1.25,0.8) -- (-1.25,-0.8) -- (1.25, -0.8) -- (1.25,0.8) --  (-1.25,0.8);
\node at (-2,-0.8) {\tiny $x = \S^2(x)$};
\node at (1.8,-0.8) {\tiny $T_1(x)$};
\node at (-2.1,0.8) {\tiny $T_1( \S^2(x))$};
\node at (1.8,0.8) {\tiny $\S(x)$};
\node at (0,-2.5)  {$Q_2$, rotation number $1/2$};
\end{tikzpicture}
\end{minipage}
\caption{$\alpha = \pi/2$.
}\label{fig-even}
\end{figure}
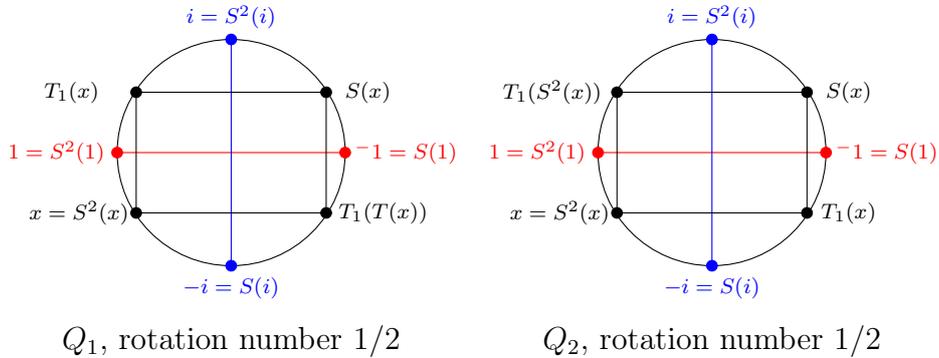

%%%%%%%%%%

\begin{figure}[h]
\begin{minipage}[ht]{.5\linewidth}
\centering
\begin{tikzpicture}[scale=1]
\draw[]  (0,0) circle (1.5cm);

\draw [fill,red] (-1.5,0) circle [radius=2pt];
\draw [fill,red] (0.75,1.29) circle [radius=2pt];
\draw [fill,red](0.75,-1.29) circle [radius=2pt];

\node at (-2.3,0) {\color{red} \tiny $1= \S^3 (1)$};
\node at (0.9,1.7) {\color{red} \tiny $\S (1)$};
\node at (0.9,-1.7) {\color{red} \tiny $\S^2(1)$};

\draw[red] (-1.5,0) -- (0.75,1.29) -- (0.75,-1.29) ;

%%%%%

\draw [fill,blue] (1.5,0) circle [radius=2pt];
\draw [fill,blue] (-.75,1.29) circle [radius=2pt];
\draw [fill,blue](-0.75,-1.29) circle [radius=2pt];

\node at (2.3,0) {\color{blue} \tiny $1= \S^3 (1)$};
\node at (-0.9,1.7) {\color{blue} \tiny $\S (1)$};
\node at (-0.9,-1.7) {\color{blue} \tiny $\S^2(1)$};

\draw[blue] (1.5,0) -- (-0.75,1.29) -- (-0.75,-1.29) ;
%%%%%%

\draw (-1.25,0.8) --(-1.25,- 0.8) ;
\draw (-1.25,0.8) -- (0.1,1.5);
\draw (0.1,1.5) -- (0.1,-1.5);
\draw (0.1,-1.5)  --(1.35,-0.65);

\draw (-1.25,-0.8) -- (1.35,0.65);
\draw (1.35,0.65) --(1.35,-0.65);

\draw [fill] (-1.25,0.8) circle [radius=2pt];
\draw [fill] (-1.25,-0.8) circle [radius=2pt];
\draw [fill]  (0.1,1.5) circle [radius=2pt];
\draw [fill] (0.1,-1.5) circle [radius=2pt];
\draw [fill]  (1.35,0.65) circle [radius=2pt];
\draw [fill] (1.35,-0.65) circle [radius=2pt];

\node at (-2,-0.8) {\tiny $x = \S^3(x)$};
\node at (2,-0.65) {\tiny $\S^2(x)$};
\node at (0.1,1.8) {\tiny  $\S^(x)$};

\node at (0,-2.5)  {$Q_1$, rotation number $1/3$};

\end{tikzpicture}
\end{minipage}\nolinebreak
\begin{minipage}[ht]{.5\linewidth}
\centering
\begin{tikzpicture}[scale=1]
\draw[]  (0,0) circle (1.5cm);

\draw [fill,red] (-1.5,0) circle [radius=2pt];
\draw [fill,red] (0.75,1.29) circle [radius=2pt];
\draw [fill,red](0.75,-1.29) circle [radius=2pt];

\node at (-2.3,0) {\color{red} \tiny $1= \S^3 (1)$};
\node at (0.9,1.7) {\color{red} \tiny $\S^2 (1)$};
\node at (0.9,-1.7) {\color{red} \tiny $\S(1)$};

\draw[red] (-1.5,0) -- (0.75,1.29) -- (0.75,-1.29) ;

\draw [fill,blue] (1.5,0) circle [radius=2pt];
\draw [fill,blue] (-.75,1.29) circle [radius=2pt];
\draw [fill,blue](-0.75,-1.29) circle [radius=2pt];

\node at (2.3,0) {\color{blue} \tiny $1= \S^3 (1)$};
\node at (-0.9,1.7) {\color{blue} \tiny $\S^2 (1)$};
\node at (-0.9,-1.7) {\color{blue} \tiny $\S(1)$};

\draw[blue] (1.5,0) -- (-0.75,1.29) -- (-0.75,-1.29) ;

\draw (-1.25,0.8) --(-1.25,- 0.8) ;
\draw (-1.25,0.8) -- (0.1,1.5);
\draw (0.1,1.5) -- (0.1,-1.5);
\draw (0.1,-1.5)  --(1.35,-0.65);

\draw (-1.25,-0.8) -- (1.35,0.65);
\draw (1.35,0.65) --(1.35,-0.65);

\draw [fill] (-1.25,0.8) circle [radius=2pt];
\draw [fill] (-1.25,-0.8) circle [radius=2pt];
\draw [fill]  (0.1,1.5) circle [radius=2pt];
\draw [fill] (0.1,-1.5) circle [radius=2pt];
\draw [fill]  (1.35,0.65) circle [radius=2pt];
\draw [fill] (1.35,-0.65) circle [radius=2pt];

\node at (-2,-0.8) {\tiny $x = \S^3(x)$};
\node at (2,-0.65) {\tiny $\S(x)$};
\node at (0.1,1.8) {\tiny  $\S^2(x)$};
\node at (0,-2.5)  {$Q_2$, rotation number $2/3$};
\end{tikzpicture}
\end{minipage}
\caption{$\alpha = \pi/3$.}
\label{fig-odd}
\end{figure}
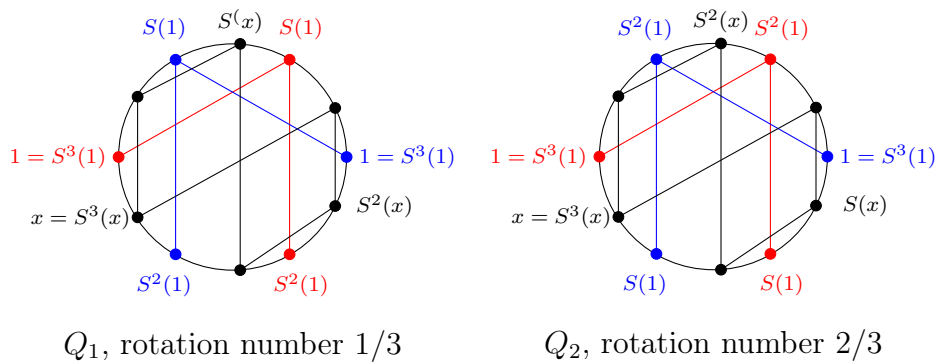

\section{Periodic orbits}\label{sec3}
\subsection{Fixed points of $\S$}
If the two foliations coincide, then $\S$ is the identity map. Now  
assume that the two foliations do not coincide, i.e., $\theta_1 \ne \theta_2$.

If $P$ is strictly convex then the map $T_i$  fixes the point $x$ if and only if a line of the foliation is a supporting 
line\footnote{A supporting line $L$ of a planar curve $C$  is a line that contains at least one point of $C$, and $C$ lies completely in one of the two closed half-planes defined by $L$.}  to $Q_i$ at $x$.
In the case $P$ is convex but not strictly  convex the necessary and sufficient condition is more complicated.
The point $x$ is a fixed point if and only if either  i) a line of the foliation is a supporting line  to $Q_i$ at $x$, and $x$ is isolated in this intersection;
or ii) a line of the foliation is tangent to $Q_i$ in an interval, and $x$ is the center point of this interval.

If a point $x \in \partial P$ is fixed by both $T_1$ and $T_2$, then it is a fixed point of $\S$ and thus the rotation number 
of $\S|{Q_i}$ is zero for $i \in \{1,2\}$. The converse is also true.  Suppose that $x \in Q_1$ is not fixed by $T_1$, then
since by assumption the two foliations are not parallel $\T_1(x) \ne x$. Combining this with Corollary \ref{cor-rotation} we have shown

\begin{proposition}\label{prop-fix} Suppose that  $\theta_1 \ne\theta_2$.
The map $\S_{\theta_1,\theta_2}$ has a fixed point if and only if there is a point $x \in \partial P$ such that the lines through $x$ in these two
directions are  supporting lines at $x$. Moreover, each fixed point is semi stable,  i.e., repelling, from one side, and attracting from the opposite  side.
\end{proposition}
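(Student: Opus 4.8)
The plan is to treat the equivalence and the ``moreover'' clause separately, both by reducing to properties of the orientation-reversing involutions $T_1$ and $T_2$.

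\emph{The equivalence.} Recall that $\S|_{Q_1}$ acts on $Q_1$ by $x \mapsto T_2(T_1(x))$ and that each $T_i$ is an involution; hence $(x,1)$ is a fixed point of $\S$ if and only if $T_1(x)=T_2(x)$. Set $y:=T_1(x)=T_2(x)$. In every case of the definition of $T_i$ the point $T_i(x)$ lies on the leaf through $x$ in direction $\theta_i$, so if $x\ne y$ the line through $x$ and $y$ would have direction $\theta_1$ and direction $\theta_2$ at once, contradicting $\theta_1\ne\theta_2$. Thus $y=x$, i.e.\ $x$ is a common fixed point of $T_1$ and $T_2$; conversely, any common fixed point of $T_1$ and $T_2$ is fixed by $\S$ (in both copies $Q_1,Q_2$). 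By the description of $\mathrm{Fix}(T_i)$ recalled just above — for $P$ strictly convex, $T_i$ fixes $x$ exactly when the line through $x$ in direction $\theta_i$ supports $\partial P$ at $x$; in general through the refined conditions (i)--(ii) given above — this is exactly the asserted condition, and one may equally phrase it as $\rho(\S)=0$ using Corollary~\ref{cor-rotation}.

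\emph{Setup for semistability.} Every fixed point of $\S$ is a common fixed point of $T_1$ and $T_2$, and an orientation-reversing involution of the circle has exactly two fixed points; hence $\mathrm{Fix}(\S)$ meets each copy $Q_i$ in at most two points, so all fixed points are isolated and ``one side / the opposite side'' is meaningful. Fix such an $x\in Q_1$. On the arc of $Q_1$ lying immediately to one side of $x$, up to the next fixed point of $\S|_{Q_1}$ (or all the way around $Q_1$ if $x$ is the only one), the orientation-preserving homeomorphism $\S|_{Q_1}$ has no fixed point, so it moves every point of that arc monotonically towards one of its endpoints; thus $x$ is attracting or repelling from that side, and likewise from the other. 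Finally, since $T_1^2=\mathrm{id}$, $T_1\,\S|_{Q_1}\,T_1 = T_1T_2T_1T_1 = T_1T_2 = (T_2T_1)^{-1}=(\S|_{Q_1})^{-1}$, so $T_1$ conjugates $\S|_{Q_1}$ to its inverse; and $T_1$ fixes $x$ and reverses orientation, so it interchanges the two local sides of $x$ in $Q_1$.

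\emph{Conclusion.} Suppose $x$ is attracting from one side. Conjugating by $T_1$ turns $\S|_{Q_1}$ into its inverse and that side into the other, so $(\S|_{Q_1})^{-1}$ is attracting from the opposite side, i.e.\ $\S|_{Q_1}$ is repelling from the opposite side; hence $x$ is semistable. If instead $x$ is repelling from one side, the same computation shows it is attracting from the opposite side. The identical argument in $Q_2$, using $T_2$ (which also fixes $x$), handles the fixed points there. The one genuinely non-formal step is the last: a fixed point of an orientation-preserving circle homeomorphism could a priori be attracting, or repelling, from both sides, and it is precisely the reversibility relation $\S\sim\S^{-1}$ that excludes this. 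The hypothesis $\theta_1\ne\theta_2$ enters only to force a fixed point of $\S$ to be a true common fixed point of $T_1$ and $T_2$ — a corner of $P$ at which both foliation directions are supporting — which is what makes $\mathrm{Fix}(\S)$ finite and makes the conjugating involution $T_1$ fix $x$.
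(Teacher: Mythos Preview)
Your proof is correct. The equivalence is argued just as in the paper: a fixed point of $\S$ must be a common fixed point of $T_1$ and $T_2$ because otherwise the segment from $x$ to $T_1(x)=T_2(x)$ would carry two distinct directions.

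For the semistability clause, however, you take a genuinely different and sharper route. The paper simply writes ``Combining this with Corollary~\ref{cor-rotation}'' and stops. That corollary (heteroclinic structure for rational rotation number) does force semistability when there is a \emph{single} fixed point, but when $\mathrm{Fix}(T_1)=\mathrm{Fix}(T_2)$ and $\S$ has two fixed points it does not by itself rule out one being globally attracting and the other globally repelling. Your argument does: you exploit the \emph{reversibility} relation $T_1\,\S|_{Q_1}\,T_1=(\S|_{Q_1})^{-1}$ together with the fact that the orientation-reversing involution $T_1$ fixes $x$ and swaps its two sides. This buys you an honest proof of semistability in all cases with essentially one line of algebra, and it makes transparent \emph{why} the chess billiard cannot have a hyperbolic sink or source --- it is a time-reversible circle map. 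The paper's appeal to the Poincar\'e--Denjoy corollary is shorter to write but, read literally, leaves the two-fixed-point case unjustified; your reversibility argument closes that gap and would be worth recording.
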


\begin{corollary}\label{cor1}
A $C^1$ strictly convex domain can not have any fixed point (unless $\theta_1 = \theta_2$).
\end{corollary}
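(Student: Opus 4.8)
The plan is to reduce everything to the characterization of fixed points already obtained in Proposition \ref{prop-fix}. That proposition says (under the standing hypothesis $\theta_1\ne\theta_2$, which is exactly the ``unless'' clause here) that $\S_{\theta_1,\theta_2}$ has a fixed point if and only if there exists $x\in\partial P$ such that \emph{both} lines through $x$ in the directions $\theta_1$ and $\theta_2$ are supporting lines of $P$ at $x$. So it suffices to show that a $C^1$ strictly convex domain admits no boundary point at which two distinct lines are simultaneously supporting lines.

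First I would recall why strict convexity forces the supporting line at a point to meet $\partial P$ only there: if a line $L$ is a supporting line at $x$ and also passes through another point $y\in\partial P$, then the whole segment $[x,y]$ lies in $\partial P$ (since $P$ is convex and $[x,y]\subset L$ lies in a single closed half-plane), contradicting strict convexity. Hence each supporting line is in fact the tangent line at $x$. Next I would invoke $C^1$-regularity: the boundary curve has a continuously varying tangent direction, so at every $x\in\partial P$ there is a unique tangent line, and therefore a unique supporting line (equivalently, the normal cone at each boundary point is a single ray). Consequently two supporting lines at the same point must coincide.

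Putting these together: if $x$ were a point at which the lines in directions $\theta_1$ and $\theta_2$ are both supporting lines, then by the previous paragraph these two lines coincide, forcing $\theta_1=\theta_2$, contrary to hypothesis. By Proposition \ref{prop-fix}, $\S_{\theta_1,\theta_2}$ has no fixed point. There is no real obstacle here; the only point that needs a word of care is the uniqueness of the supporting line, which is precisely where the two hypotheses are used — strict convexity identifies supporting lines with tangent lines, and $C^1$ guarantees the tangent line is unique at every boundary point.
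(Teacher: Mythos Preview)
Your argument is correct and follows exactly the route the paper intends: the corollary is stated immediately after Proposition~\ref{prop-fix} with no separate proof, so the paper regards it as an instant consequence of that characterization together with the uniqueness of the supporting line at each boundary point of a $C^1$ convex domain. You have simply written out this implicit reasoning (the detour through strict convexity to identify supporting lines with tangent lines is harmless but not strictly needed, since $C^1$ alone already forces the supporting line to be unique).
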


\begin{corollary}\label{cor2}
If $P$ is a triangle and $\theta_1,\theta_2$ are arbitrary, then $\S_{\theta_1,\theta_2}$
has a fixed point or a periodic point with period 2 or  3.
\end{corollary}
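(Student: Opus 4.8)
The plan is to reduce everything to Proposition~\ref{prop-fix} together with an explicit computation of how $T_1$ and $T_2$ act on the three vertices of $P=ABC$. First dispose of the degenerate case $\theta_1=\theta_2$, where $\S$ is the identity and every point is fixed; from now on assume $\theta_1\ne\theta_2$. Recall that each $T_i$ is an orientation-reversing involution of $\partial P$ (on a chord meeting $\partial P$ in two points it swaps the endpoints; on a chord tangent to a side it is the central symmetry of that side), and hence has exactly two fixed points. The one preliminary fact I need is a description of $\mathrm{Fix}(T_i)$, read off from the fixed-point criterion stated just before Proposition~\ref{prop-fix}: (a) if $\theta_i$ is not parallel to any side of $P$, then for a linear functional $\ell_{\theta_i}$ with $\theta_i$ in its kernel the two vertices at which $\ell_{\theta_i}$ is minimal, resp. maximal, are distinct, the supporting line of $P$ in direction $\theta_i$ touches $P$ only at each of them, and these two vertices are exactly $\mathrm{Fix}(T_i)$; (b) if $\theta_i$ is parallel to a side $e_i$, then $\mathrm{Fix}(T_i)=\{m_i,u_i\}$ where $m_i$ is the midpoint of $e_i$ and $u_i$ is the vertex opposite $e_i$ (the two endpoints of $e_i$ are \emph{not} fixed: $T_i$ exchanges them by the central symmetry of $e_i$).

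\emph{Case 1: neither $\theta_1$ nor $\theta_2$ is parallel to a side.} Then $\mathrm{Fix}(T_1)$ and $\mathrm{Fix}(T_2)$ are two $2$-element subsets of the $3$-element vertex set, so they share a vertex $u$; since $T_1(u)=T_2(u)=u$ we get $\S(u)=T_2T_1(u)=u$, so $\S$ has a fixed point (Proposition~\ref{prop-fix}).

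\emph{Case 2: exactly one direction, say $\theta_1$, is parallel to a side $e_1$; let $u_1$ be the opposite vertex.} If $u_1\in\mathrm{Fix}(T_2)$, then $u_1\in\mathrm{Fix}(T_1)\cap\mathrm{Fix}(T_2)$ and $\S$ has the fixed point $u_1$ exactly as in Case 1. Otherwise $\mathrm{Fix}(T_2)$, being a $2$-subset of $\{A,B,C\}$ missing $u_1$, is the pair of endpoints of $e_1$; call them $B,C$. Then $T_2(B)=B$, $T_2(C)=C$, while $T_1$ exchanges $B$ and $C$, so $\S(B)=T_2T_1(B)=T_2(C)=C$ and $\S(C)=T_2(B)=B$: the orbit $\{B,C\}$ is periodic of period $2$.

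\emph{Case 3: both directions are parallel to sides; since $\theta_1\ne\theta_2$ these sides are distinct, $\theta_1\parallel e_1$, $\theta_2\parallel e_2$, $e_1\ne e_2$.} The two sides meet in a common vertex; label $P=ABC$ so that $e_1=BC$ and $e_2=CA$ (so $A$ is opposite $e_1$ and $B$ is opposite $e_2$). By (b), $T_1$ fixes $A$ and exchanges $B,C$ (central symmetry of $BC$), while $T_2$ fixes $B$ and exchanges $C,A$ (central symmetry of $CA$). Composing, $\S(A)=T_2T_1(A)=T_2(A)=C$, $\S(C)=T_2T_1(C)=T_2(B)=B$, $\S(B)=T_2T_1(B)=T_2(C)=A$, so $\{A,B,C\}$ is a periodic orbit of $\S$ of period $3$. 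This exhausts all cases. The only non-bookkeeping ingredients are the preliminary description of $\mathrm{Fix}(T_i)$ and, in Case 1, the pigeonhole remark that two $2$-subsets of a $3$-set meet; I expect the point that needs the most care is the exceptional-direction analysis in (b) — namely that when $\theta_i$ is parallel to a side the fixed points of $T_i$ are the midpoint of that side and the opposite vertex rather than the two endpoints, since it is precisely this that produces the period-$2$ and period-$3$ vertex orbits in Cases 2 and 3. (Together with Corollary~\ref{cor-rotation} this in fact shows the chess billiard in a triangle is always purely periodic with period $\le 3$.)
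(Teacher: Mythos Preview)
Your proof is correct and follows essentially the same approach as the paper: both argue by the same case split (degenerate; neither direction parallel to a side; exactly one; both), use the pigeonhole observation that two $2$-subsets of $\{A,B,C\}$ must meet to produce a common fixed vertex in the generic case, and in the exceptional cases exploit that $T_i$ swaps the endpoints of the side parallel to $\theta_i$ while fixing the opposite vertex to obtain the period-$2$ and period-$3$ vertex orbits. Your write-up is somewhat more explicit (spelling out $\mathrm{Fix}(T_i)$ in advance and computing $\S$ on vertices symbolically), but the logic is the same.
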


\begin{proof}
If $\theta_1 = \theta_2$ then every point is fixed by $\S$, thus we assume that they are not equal.

Next consider the case when $(\theta_1,\theta_2)$ is not exceptional. 
Consider the lines of the foliation in the direction $\theta_1$ 
which intersect $P$, the extremal ones are supporting lines which pass through two distinct vertices of $P$.
The same holds for $\theta_2$, since $P$ has only three vertices there is a vertex for which
both  directions must have  supporting lines, and thus a fixed point by Proposition \ref{prop-fix}.

Turning to the case when $(\theta_1,\theta_2)$ is exceptional, we begin by 
treating the case when  only one direction is parallel to a side, say $\theta_1$, it is also a supporting line of the vertex opposite to this side.
If $\theta_2$ is a supporting line of this vertex then again  applying Propostion \ref{prop-fix} we conclude that  this vertex is fixed (see
Figure \ref{fig-pear}).
Otherwise $\theta_2$ is a supporting line of the
two endpoints of the side parallel to $\theta_1$; these endpoints are fixed by $\overline{T}_2$ and exchanged by $\overline{T}_1$, thus they are
exchanged by $\S$.

\begin{figure}[h]
\begin{tikzpicture}[]
\draw[] (0,0) -- (5,0) -- (3,1) -- (0,0);
\draw[dotted,->] (0,0) -- (6,0);
\draw[dotted,->]  (5,0) -- (2,1.5);
 \node at (-0.25,-0.25) {\tiny $x = \S^3(x)$};
\node at (1.75,1.7) {\tiny $\theta_2$};
\node at (6.25,0) {\tiny $\theta_1$};
\node at (3.25,1.25) {\tiny $S(x)$};
 \node at (5.25,-0.25) {\tiny $\S^2(x)$};
\draw [fill] (5,0) circle [radius=2pt];
\draw [fill] (0,0) circle [radius=2pt];
\draw [fill] (3,1) circle [radius=2pt];
 \end{tikzpicture}
\caption{Period 3 orbit in a  triangle.}
\label{fig-period3}
\end{figure}
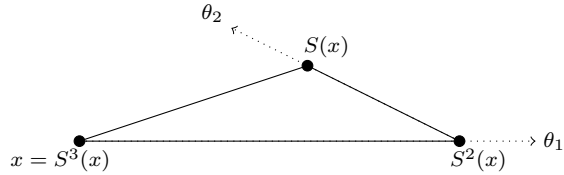
Finally in the case when both directions are parallel to a side, the map $\S$ cyclically exchanges the vertices of the triangle (Figure \ref{fig-period3}).
\end{proof}

If $P$ is a square and both foliation directions are in the same quadrant, then $\S$ has two fixed points, both semi-stable, (this is 
essentially contained in \cite{HM}, however they have not defined the dynamics at the two fixed points).
More generally, 
any convex polygon or even any convex table with a corner has an open set of pairs of directions for which $\S$ has a fixed point.  
Examples of strictly convex domains and of convex polygons with exactly one fixed point exist (see Figure \ref{fig-pear}). This point is repelling on one side and attracting on the other side.

\begin{figure}[h]
\begin{tikzpicture}[rotate=-90]
 \draw [domain=-180:0] plot ({cos(\x)}, {sin(\x)});
\draw [domain=-180:-270] plot ({cos(\x)}, {(1 - (\x+180)/100) *sin(\x)});
\draw [domain=0:90] plot ({cos(\x)}, { (1+\x/100) * sin(\x)});

\draw[blue] (-1,0) -- (1,0) -- (-.72,1) -- (.72,1) -- (-.33,1.6) -- (.33,1.6);

\draw[] (1,5) -- (-1,5) -- (-1.1,7.1) -- (1,5);
\draw[blue] (-0.5,5) -- (-0.5,6.5) -- (0.25,5) -- (0.25,5.75)  -- (0.625,5) -- (0.625,5.375);
 \end{tikzpicture}
\caption{A strictly convex domain and a triangle each having a single semi-attracting fixed point.}
\label{fig-pear}
\end{figure}
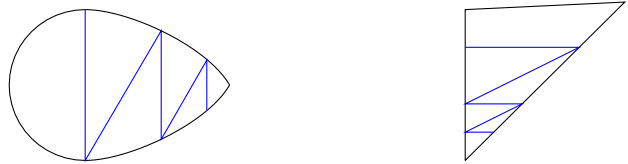

\subsection{Connections}\label{secConn}

We call an $\S$-orbit segment starting and ending at fixed points of the maps $T_i$ \textit{a  connection}.
We can think of a connection as a broken line, the \textit{length} of the connection is the number of 
segments in this line (so a fixed point has length $0$).
The following result generalizes what we showed for the circle and for fixed points.

\begin{proposition}\label{prop-connection2}
If there is a connection then the map $\S$  has a periodic point.
\end{proposition}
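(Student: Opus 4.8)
The plan is to exploit the fact that each involution $T_i$ is a \emph{time-reversing symmetry} of $\S$. First I would record the elementary identities: since $T_1,T_2$ are involutions of $\partial P$ and, in the arc-length parametrization, $\S|_{Q_1}=T_2\circ T_1$ while $\S|_{Q_2}=T_1\circ T_2=(\S|_{Q_1})^{-1}$, one has
\[
T_1\circ\S\circ T_1 \;=\; \S^{-1}\;=\;T_2\circ\S\circ T_2 ,
\qquad T_2=\S\circ T_1,\qquad T_1=\S^{-1}\circ T_2 .
\]
Consequently, if $p$ is fixed by $T_j$ then $T_j(\S^{k}p)=\S^{-k}p$ for every $k\in\Z$: the $\S$-orbit of $p$ is carried onto itself by $T_j$, symmetrically about $p$. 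The idea is to apply this observation once at each end of the connection.

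Next I would translate the given connection into this language. Suppose it joins $x_0\in\mathrm{Fix}(T_a)$ to $x_N\in\mathrm{Fix}(T_b)$, so that along its broken line the $\theta_a$-chord at $x_0$ is degenerate and the first genuine segment points in the other direction; after interchanging the names $\theta_1,\theta_2$ if necessary (which merely swaps $Q_1\leftrightarrow Q_2$ and replaces $\S$ by $\S^{-1}$, hence is harmless) we may assume $a=1$. Writing the successive vertices $x_0,x_1,\dots,x_N$ as an alternating word in $T_1,T_2$ applied to $x_0$ and repeatedly using $T_1x_0=x_0$ together with $T_2=\S T_1$ and $T_1\S T_1=\S^{-1}$, a short induction shows that the broken line visits
\[
x_0,\ \S x_0,\ \S^{-1}x_0,\ \S^{2}x_0,\ \S^{-2}x_0,\ \dots,
\]
so in particular $x_N=\S^{m}x_0$ for some $m\in\Z$ with $|m|=\lceil N/2\rceil$. (If $m=0$ then $x_0$ is fixed by both $T_1$ and $T_2$, hence is a fixed point of $\S$ and we are done; so assume $m\neq 0$.)

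Finally I would feed the terminal condition $T_bx_N=x_N$ back in. If $b=a$, then $\S^{m}x_0=x_N=T_a x_N=T_a\S^{m}x_0=\S^{-m}x_0$, whence $\S^{2m}x_0=x_0$. If $b\neq a$, then, writing $T_b=\S^{\pm1}T_a$, we get $\S^{m}x_0=x_N=T_bx_N=\S^{\pm1}T_a\S^{m}x_0=\S^{\pm1-m}x_0$, whence $\S^{2m\mp1}x_0=x_0$. In either case $x_0$ is a periodic point of $\S$, which is the assertion (and, combined with Corollary \ref{cor-rotation}, this shows in addition that $\rho(\S)$ is rational).

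I do not expect a genuine obstacle here. The points needing a little care are the bookkeeping in the middle step — keeping track of which direction is used first along the broken line, and hence whether $x_N$ is a forward or a backward iterate of $x_0$ — and the verification that the relabelling of the two directions is indeed harmless; one should also dispose at the outset of the trivial case $\theta_1=\theta_2$, in which $\S$ is the identity and every point is periodic.
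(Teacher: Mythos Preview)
Your proposal is correct and is essentially the same argument as the paper's, just carried out algebraically rather than in words. The paper's proof is a two-sentence observation: a point in $F=\{x:T_1x=x\text{ or }T_2x=x\}$ acts as a ``reflector'' for the $\S$-orbit (the orbit reverses and retraces itself), so an orbit segment that begins and ends in $F$ is forced to be periodic. Your conjugation identity $T_j\S T_j=\S^{-1}$ is exactly the algebraic content of ``the orbit reverses direction at a $T_j$-fixed point,'' and your final case split ($b=a$ giving $\S^{2m}x_0=x_0$, $b\neq a$ giving $\S^{2m\mp1}x_0=x_0$) is the precise form of ``hitting a second reflector closes up the orbit.''

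One small simplification: the detour through the broken-line enumeration $x_0,\S x_0,\S^{-1}x_0,\S^2x_0,\dots$ is not really needed. By definition a connection is an $\S$-orbit segment, so its terminal point is already $\S^kx_0$ for some $k\ge0$; you can feed the terminal condition $T_b(\S^kx_0)=\S^kx_0$ straight into the conjugation identity without ever labelling the intermediate $T$-vertices. Your self-identified caveats (the first-direction bookkeeping, the harmlessness of swapping $\theta_1\leftrightarrow\theta_2$, and the trivial case $\theta_1=\theta_2$) are all genuine but routine.
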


\begin{proof}
 Proposition \ref{prop-fix} is a special case of this result for fixed points.  
 Consider the set $F:= \{x: \text{ either } T_1(x) = x \text{ or } T_2(x) = x\}$.
 
 Suppose $x \in F$.  Then the $\S$ orbit which arrives at $x$ reverses its direction and 
retraces the orbit in the opposite direction.  Thus if it arrives at another point $y \in F$, then it is a periodic orbit.
\end{proof}

\subsection{The rotation number achieves all values}\label{sec2.1}

If the direction $\theta_2$ is not parallel to a segment in $\partial P$, then the map $(\theta_2, x) \mapsto T_2(x)$ is a monotone continuous function of  $\theta_2$ .  Thus the chess billiard map $S_{\theta_1,\theta_2}$ is a continuous function of $\theta_2$. Since the rotation number depends  continuously on the map 
we have shown 

\begin{proposition}\label{prop-cont}
 For each fixed $\theta_2$, if the direction $\theta_1$ is not parallel to a segment in $\partial P$, then \\
1) the rotation number map $\theta_1 \mapsto \rho(\T_{\theta_1,\theta_2})$ is a monotone
function of $\theta_1$ and \\
2) the  point $\theta_1$ is a point of continuity of the rotation number map.  
\end{proposition}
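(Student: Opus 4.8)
The plan is to deduce both statements from two classical facts about orientation-preserving circle homeomorphisms — that the rotation number is (weakly) monotone with respect to the $C^{0}$ order on lifts, and that it is continuous with respect to the $C^{0}$ (uniform) topology (see \cite{KH}) — by establishing the analogous monotonicity and continuity for the family $\theta_{1}\mapsto T_{1}=T_{1}^{\theta_{1}}$, and hence for $\theta_{1}\mapsto S_{\theta_{1},\theta_{2}}|_{Q_{1}}=T_{2}\circ T_{1}^{\theta_{1}}$ with $T_{2}$ held fixed; the corresponding statements on $Q_{2}$ then follow from Proposition \ref{prop-prop1}(2). Throughout, parametrize $\partial P$ by arc length and identify it with $\R/L\Z$ ($L$ the perimeter), and regard the directions as elements of $\R$, the leaf family in direction $\theta_{1}+k\pi$ being that in direction $\theta_{1}$.

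First I would prove the geometric monotonicity lemma: for each fixed $x\in\partial P$, as $\theta_{1}$ increases the point $T_{1}^{\theta_{1}}(x)$ moves monotonically (in the chosen orientation) around $\partial P$, traversing it exactly once as $\theta_{1}$ increases by $\pi$. This is a statement purely about convex curves: $T_{1}^{\theta_{1}}(x)$ is the second endpoint of the chord cut from $P$ by the line through $x$ in direction $\theta_{1}$ (equal to $x$ when that line supports $P$ at $x$, and equal to the centrally reflected point when $x$ lies in a boundary segment parallel to $\theta_{1}$), and as the line through the fixed point $x$ rotates, convexity forces its second intersection with $\partial P$ to sweep $\partial P$ monotonically; the only jumps occur when $\theta_{1}$ becomes parallel to a segment of $\partial P$, and such jumps are ``forward'', so a suitable choice of lifts $\tilde T_{1}^{\theta_{1}}$ makes $\theta_{1}\mapsto\tilde T_{1}^{\theta_{1}}(x)$ weakly increasing on $\R$ with $\tilde T_{1}^{\theta_{1}+\pi}=\tilde T_{1}^{\theta_{1}}+L$. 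Composing with a fixed lift $\tilde T_{2}$, which is \emph{decreasing} since $T_{2}$ reverses orientation, we get that $\theta_{1}\mapsto\tilde S^{\theta_{1}}(x)=\tilde T_{2}\bigl(\tilde T_{1}^{\theta_{1}}(x)\bigr)$ is weakly decreasing for every $x$ and drops by $L$ as $\theta_{1}$ increases by $\pi$; monotonicity of the rotation number in the $C^{0}$ order of lifts then yields that $\theta_{1}\mapsto\rho(S_{\theta_{1},\theta_{2}})$ is weakly monotone (decreasing by $1$ per period $\pi$), which is (1).

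For (2), assume $\theta_{1}$ is not parallel to any segment of $\partial P$. I would show $T_{1}^{\theta_{1}'}\to T_{1}^{\theta_{1}}$ uniformly on $\partial P$ as $\theta_{1}'\to\theta_{1}$ by combining three observations: away from the two contact points of the supporting lines in direction $\theta_{1}'$ the chord cut from $P$ varies continuously, so $(\theta_{1}',x)\mapsto T_{1}^{\theta_{1}'}(x)$ is continuous there; at a contact point $p$ one has $T_{1}^{\theta_{1}}(p)=p$, and chords through boundary points near $p$ in directions near $\theta_{1}$ are short, so their endpoints stay near $p$; and, since $\partial P$ has at most countably many maximal boundary segments of total length $\le L$ and none in direction $\theta_{1}$, for $\theta_{1}'$ sufficiently close to $\theta_{1}$ every boundary segment parallel to $\theta_{1}'$ is arbitrarily short, so the jumps of $T_{1}^{\theta_{1}'}$ are uniformly negligible. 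Compactness of $\partial P$ turns this into uniform convergence, hence $S_{\theta_{1}',\theta_{2}}\to S_{\theta_{1},\theta_{2}}$ uniformly, and continuity of the rotation number in the $C^{0}$ topology shows that $\theta_{1}$ is a point of continuity of $\theta_{1}\mapsto\rho(S_{\theta_{1},\theta_{2}})$, which is (2).

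I expect the main obstacle to be the geometric lemma together with the joint-continuity bookkeeping at the contact points: one must treat corners (where the supporting directions fill a whole arc) and boundary segments (which produce the jumps) uniformly, and verify that in the parameter $\theta_{1}$ the only discontinuities sit exactly at directions parallel to a segment of $\partial P$ — precisely the phenomenon behind Figure \ref{fig-square}, where rotating $\theta_{1}$ across a side of the square makes the endpoints of that side jump to a corner. Once that is in place, everything reduces to quoting the monotonicity and continuity of the rotation number from \cite{KH}.
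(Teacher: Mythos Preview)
Your proposal is correct and follows essentially the same route as the paper. The paper's argument is extremely terse: it simply asserts that when $\theta_1$ is not parallel to a segment of $\partial P$ the map $(\theta_1,x)\mapsto T_1(x)$ is monotone and continuous in $\theta_1$, hence so is $S_{\theta_1,\theta_2}$, and then invokes the classical continuity (and implicitly monotonicity) of the rotation number. Your write-up fills in exactly the details the paper suppresses --- the geometric sweeping lemma, the bookkeeping at supporting points and boundary segments, and the passage from monotone lifts to monotone rotation number --- so the two arguments coincide in substance.
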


\begin{theorem}\label{thm-all}
Fix a strictly convex table and a direction $\theta_2$, then as we vary $\theta_1 \in [0,\pi)$ the rotation number  of $\S$ achieves all values in $[0,1)$.
\end{theorem}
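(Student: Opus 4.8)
The plan is to view the rotation number, as a function of $\theta_1$, as a continuous self-map of a circle, and to pin down its degree by deforming the table to a disk.

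First I would reformulate the statement. Since the direction $\theta_1$ and the direction $\theta_1+\pi$ determine the same foliation, we have $\S_{\theta_1+\pi,\theta_2}=\S_{\theta_1,\theta_2}$, so $\theta_1\mapsto\rho(\S_{\theta_1,\theta_2})$ descends to a map
\[
  r_P:\ \R/\pi\Z\longrightarrow\R/\Z,\qquad r_P(\theta_1)=\rho(\S_{\theta_1,\theta_2}).
\]
A strictly convex boundary contains no segment, so Proposition \ref{prop-cont} applies at every $\theta_1$ and tells us that $r_P$ is continuous (and monotone). The theorem asserts exactly that $r_P$ is onto, and for a continuous map between two circles this follows as soon as its degree is nonzero (a map missing a point factors through an arc, hence is null-homotopic). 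So it suffices to show $\deg r_P=\pm1$.

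Next I would compute the degree in a model case and transport it. For the disk $D$, the computation of Section \ref{sec2} gives $r_D(\theta_1)=\tfrac{\theta_2-\theta_1}{\pi}\bmod 1$, which plainly has degree $\pm1$. To reduce the general table to this one, I would join $P$ to $D$ through strictly convex tables, e.g. by the Minkowski combination $P_t:=(1-t)\,P\oplus t\,D$, $t\in[0,1]$, so that $P_0=P$, $P_1=D$, each $P_t$ is strictly convex (a Minkowski sum of two strictly convex bodies is strictly convex), and $\operatorname{len}(\partial P_t)$ varies continuously. The induced involutions $T_1^{P_t},T_2^{P_t}$, expressed in normalized arc length, depend jointly continuously on $(t,\theta_1)$ --- exactly the kind of continuity invoked in Proposition \ref{prop-cont}, now also in the table variable --- hence $\S^{P_t}_{\theta_1,\theta_2}$ depends continuously on $(t,\theta_1)$ in the uniform topology on circle homeomorphisms, and therefore so does its rotation number. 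Thus $(t,\theta_1)\mapsto r_{P_t}(\theta_1)$ is a homotopy of continuous circle maps, degree is a homotopy invariant, and $\deg r_P=\deg r_D=\pm1$, finishing the proof.

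The crux is the non-triviality of the winding: one must rule out that $r_P$ is constant, i.e. that $\rho(\S_{\theta_1,\theta_2})=0$ for all $\theta_1$ (which by Proposition \ref{prop-fix} would say that for every direction $\theta_1$ some boundary point has both foliations as supporting lines). Continuity and monotonicity alone do not exclude a constant; the comparison with the disk is what forces the degree to be $1$ rather than $0$. The one routine ingredient I have not spelled out is the joint continuity of $T_i^{P_t}$ in $(t,\theta_1)$ across tables that may have corners; this is established just as in Proposition \ref{prop-cont}, using that for a strictly convex body every line meets the boundary in at most two points and the second-intersection map is a homeomorphism varying continuously with the data. Granting that, the degree count and the surjectivity of nonzero-degree circle maps are standard; and since $r_P$ is also monotone, it in fact sweeps out $[0,1)$ in an orderly, interval-by-interval way.
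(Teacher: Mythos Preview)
Your argument is correct and genuinely different from the paper's. The paper proceeds directly on the fixed table: it notes that $\S_{\theta_1,\theta_2}$ is the identity when $\theta_1=\theta_2$ and when $\theta_1=\theta_2+\pi$, invokes Corollary~\ref{cor1} to say that the rotation number is nonzero for all intermediate $\theta_1$, and then uses the monotonicity and continuity of Proposition~\ref{prop-cont} to conclude that the rotation number climbs from $0$ to $1$ over that interval. No auxiliary tables, no homotopy.

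Your route trades this for a degree computation transported from the disk via a Minkowski path. This is heavier machinery, but it buys something: the paper's appeal to Corollary~\ref{cor1} carries a $C^1$ hypothesis that the theorem statement does not impose. For a strictly convex table with a corner, Proposition~\ref{prop-fix} allows fixed points (hence $\rho=0$) on a whole interval of $\theta_1$'s whenever $\theta_2$ happens to be a supporting direction at that corner, so the ``nonzero in the interior'' step is not available. Your degree argument is indifferent to such plateaus and goes through for all strictly convex tables. The price is the joint continuity of $(t,\theta_1)\mapsto S^{P_t}_{\theta_1,\theta_2}$ along the Minkowski family, which you flag as routine; it is, but it is the one place a careful reader might ask for a sentence more (e.g.\ that the normalized arc-length parametrizations of $\partial P_t$ vary continuously because support functions add under Minkowski sum and perimeters vary affinely). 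Your claim that the Minkowski sum of two strictly convex bodies is strictly convex is correct: the face of $A\oplus B$ in any direction is the sum of the faces, each a singleton.
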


\begin{proof} In the proof we think of $\theta_1$ and $\theta_2$ as  oriented vectors.
For  $\theta_2=\theta_1$ and  
for  $\theta_2=\theta_1 + \pi$ we have $\S=id$. In both cases  the rotation number is $0$. 
By Corollary \ref{cor1}, for fixed $\theta_1$  the rotation number is 
non zero for all $\theta_2 \not \in \{  \theta_1, \theta_1 + \pi\}$.  Furthermore the rotation number is monotonic
and continuous (by Proposition \ref{prop-cont})  in $\theta_2$.  Combining
these facts  implies that the rotation number varies from 0 to 1 as 
 $\theta_2$ varies from $\theta_1$ to $\theta_1 + \pi$, in the sense that $\displaystyle{ \lim_{\tiny \theta_2 \nearrow \theta_1 + \pi} \rho(S_{\theta_1,\theta_2} )= 1}$.
\end{proof}

\subsection{Khmelev result}\label{secK}

Khmelev  \cite{Kh} showed that if $P$ is convex and is sufficiently smooth  everywhere except one point
where the first derivative has a jump discontuity then the rotation number $\rho(\S)$ is rational 
for almost all values of $\theta_1,\theta_2$  (see his article for the precise smoothness assumptions).

\subsection{Periodic orbits in polygons}\label{sec5}

Fix $P$, and suppose that the rotation number associated to a pair of directions $\theta_1,\theta_2$ is rational, $p/q$ in reduced form.
Let $I$ be an interval contained in a side of $P$, perhaps degenerate to a point, such that $\S^q I = I$ and $\S^q J \ne J$ for any $J \supset I$;
we call $C(I) := \cup_{j =0}^{q-1} \S^jI$ a {\em periodic cylinder}.
By continuity a periodic cylinder $C(I)$ is always a closed set. In the case $I$ degenerates to a point, a periodic cylinder is 
simply a periodic orbit of period $q$.  
 If the interval $I $ is not degenerate  then we will call each $x \in I$ a {\em neutral periodic orbit} and $C(I)$ a
 {\em neutral cylinder}, 
in the language of \cite{HM} a neutral cylinder is called a {\em treachery} (see Example 5.3 of \cite{HM} 
to understand this connection).

\begin{proposition}
Suppose that $P$ is a convex polygon with $k$ sides, and $(\theta_1,\theta_2)$ is such that the rotation number $\rho(\S) = p/q$ is rational, then the number of periodic cylinders for $\S|_{Q_i}$ is at most $3k-4$ $(i =1,2)$.
\end{proposition}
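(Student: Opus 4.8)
The plan is to bound the number of periodic cylinders by counting the "boundary" phenomena that can create a new cylinder. A periodic cylinder $C(I)$ is maximal in the sense that $\S^q J \ne J$ for any $J \supsetneq I$; this maximality must be caused by something obstructing the extension of $I$. The key observation is that, since $\S^q$ is a circle homeomorphism on each $Q_i$ and the obstruction to extending $I$ on either side is that $\S^q$ stops being the identity there, each endpoint of $I$ is either an endpoint of the $\S$-orbit-closure's support or, more usefully, a point whose forward orbit under $\S$ (up to time $q-1$) lands on a vertex of $P$ or on a point where $T_1$ or $T_2$ fails to be locally affine — in other words, on a vertex. So I would first establish: each endpoint $e$ of each maximal interval $I$ has the property that some iterate $\S^j(e)$, $0 \le j \le q-1$, is a vertex of $P$, OR $e$ is a fixed point of $T_1$ or $T_2$ (a supporting-line contact point).

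First I would make precise why the maps $T_1, T_2$, restricted to the interior of a side of $P$, are affine (in arc-length coordinates, once we fix which side the image lies on): a line in a fixed direction meeting two fixed sides of a convex polygon determines the second intersection point as an affine function of the first. Hence $\S = T^2$ is piecewise affine on each $Q_i$, with breakpoints only where the combinatorial "side-pairing type" changes; these changes happen exactly when an orbit passes through a vertex. Consequently $\S^q$ is piecewise affine with breakpoints contained in the finite set $V^* := \bigcup_{j=0}^{q-1} \S^{-j}(\{\text{vertices}\})$ together with the fixed points of $T_i$. On any interval where $\S^q$ is affine and equal to the identity at two interior points, it is the identity on the whole interval. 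Therefore a maximal periodic interval $I$ cannot be extended past an endpoint only if that endpoint lies in $V^*$ or is a fixed point of some $T_i$ — this is the core structural claim.

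Next I would count. Each vertex $v$ of $P$ has a forward $\S$-orbit; the points $\S^{-j}(v)$ for $j = 0, \dots, q-1$ contribute at most $q$ preimages — but that is the wrong bookkeeping. Instead I count endpoints of cylinders directly: two distinct maximal periodic intervals $I \ne I'$ (considered up to the $\S$-action, i.e. distinct cylinders) have disjoint interiors, so their endpoints are genuinely distinct points modulo the cylinder identification; and each such endpoint, being in $V^*$ or a $T_i$-fixed point, can be "charged" to a vertex. A convex $k$-gon has $k$ vertices; each vertex, lying in a single side-closure, can serve as the breakpoint-source for at most a bounded number of cylinder-endpoints — here the count $3k - 4$ must come from: each of the $k$ sides can contain the "base interval" $I$ of several cylinders, the endpoints of all these base intervals on a single side are ordered and interleaved with the at most ... vertex-images landing in that side, plus the two fixed points of $T_i$. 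The refined combinatorial count — showing the total is $3k-4$ rather than something like $4k$ — is the step I expect to be the main obstacle, and I would handle it by a careful Euler-characteristic-style or direct interval-counting argument on each side, noting that the two $T_i$-fixed points and the way consecutive vertices share sides removes the expected slack (the "$-4$" strongly suggests a handshake/double-counting correction plus the two fixed points).

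Finally, the irrational case is vacuous (no periodic orbits), and the degenerate case $\theta_1 = \theta_2$ gives $\S = \mathrm{id}$, so the whole of $Q_i$ is one cylinder — one can check $1 \le 3k - 4$ for $k \ge 2$. For $k = 3$ (triangles) the bound gives $5$, consistent with Corollary \ref{cor2}. The write-up would proceed: (i) piecewise-affinity lemma; (ii) breakpoint-location claim; (iii) "maximal interval endpoints are breakpoints or $T_i$-fixed points"; (iv) the combinatorial count yielding $3k-4$.
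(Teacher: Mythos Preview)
Your structural setup is sound: $\S$ is indeed piecewise affine on $Q_i$, its breakpoints are precisely the points whose $T$-trajectory hits a vertex, and the endpoints of a maximal periodic interval must be breakpoints of $\S^q$. (Your separate mention of $T_i$-fixed points is redundant here: for a non-exceptional direction on a polygon, the $T_i$-fixed points are vertices.)

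The genuine gap is the count. Your plan to ``charge'' each endpoint of a maximal periodic interval to a vertex via $V^* = \bigcup_{j=0}^{q-1} \S^{-j}(\text{vertices})$ does not give a usable bound, because $|V^*|$ grows like $qk$, and you have no mechanism to cancel the factor of $q$. The Euler-characteristic/handshake heuristic you propose for extracting the ``$-4$'' is not going to materialize from that bookkeeping.

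The paper's argument sidesteps this entirely by counting the affine pieces of $\S$ itself (one iterate, not $q$). The breakpoints of $T_1$ on $Q_1$ are the $k$ vertices together with the at most $k-2$ non-vertex points on foliation lines through vertices, so $2k-2$ points; pulling back the analogous $2k-2$ breakpoints of $T_2$ by $T_1^{-1}$ adds only $k-2$ new points, since $T_1^{-1}(\text{vertices})$ is already contained in the first set. This gives at most $3k-4$ intervals on which $\S$ is affine. Then $\S^q$ has at most $q(3k-4)$ affine pieces, each with at most one fixed component; since a periodic cylinder of period $q$ contributes exactly $q$ distinct fixed components, dividing by $q$ gives the bound $3k-4$. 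The point you are missing is that the factor $q$ appears symmetrically on both sides and cancels, so the whole problem reduces to counting breakpoints of a \emph{single} iterate of $\S$.
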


\begin{proof}
Consider $P$ foliated by lines in the direction $\theta_1$.  There are 2 lines of this foliation which are supporting lines,
and (at most) $k - 2$ other lines which pass through a vertex of $P$. 
Consider the intersection of these lines with $\partial P$, this intersection consists of the $k$ corners plus (at most)
$k-2$ other points, so (at most) $2k -2$ points. 
These points partition $Q_1$ into (at most) $2k-2$ intervals on which 
$T_1$ is affine.  

The same construction yields  (at most) $2k-2$ points in $Q_2$ for the direction $\theta_2$.
Take the preimage $T_1^{-1}$ of these points,  yields  (at most) $2k-2$ points in $Q_1$, however  $k$
of these points (namely $T^{-1}_1$ of the vertices) are in the previously defined collection of points in $Q_1$. 
Thus in total we have (at most) $3k-4$ points in $Q_1$, which define (at most) $3k-4$ intervals on which $S$ is affine.
 Therefore
the map $\S^q|_{Q_i}$ has at most $q(3k-4)$ intervals of affinity. But each piece of affinity can  intersect the diagonal at most one time.\end{proof}

\section{The square}\label{sec-square}

Suppose the square is $[0,1]^2$.

\begin{theorem}\label{thm-irrat}
There exists a direction $(\theta_1,\theta_2)$ such that the chess billiard map $S_{\theta_1,\theta_2} $ in the square has an irrational rotation number (and thus is aperiodic).
\end{theorem}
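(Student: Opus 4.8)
The plan is to produce a single pair of directions in the square for which the rotation number is irrational, by exploiting the monotonicity and continuity of $\theta_1 \mapsto \rho(\S_{\theta_1,\theta_2})$ together with an exact computation of the rotation number at two carefully chosen rational slopes. First I would fix a convenient normalization, say $\theta_2$ pointing in a fixed direction (e.g.\ horizontal, or along a diagonal of the square), so that the family $\S_{\theta_1,\theta_2}$ becomes a one-parameter family indexed by $\theta_1 \in [0,\pi)$. By Proposition~\ref{prop-cont} the map $\theta_1 \mapsto \rho(\S_{\theta_1,\theta_2})$ is monotone, and away from the finitely many exceptional directions it is continuous; by Theorem~\ref{thm-all}'s mechanism it sweeps out an interval of values. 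The key point is that a monotone function that takes a whole interval of values cannot take only rational values --- indeed if $\rho$ were rational for every non-exceptional $\theta_1$ in some subinterval, then by monotonicity it would be locally constant on a full-measure set, and combined with continuity the image would be countable, contradicting that it surjects onto a nondegenerate interval. So the existence of \emph{some} $\theta_1$ with irrational rotation number follows abstractly.

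The honest work is to make this rigorous rather than hand-wavy: the cleanest route is to compute $\rho$ explicitly at two rational directions and then invoke the intermediate value property. In the square with both foliation directions having rational slope, the map $T_i$ on each side is piecewise affine with rational data, so $\S_{\theta_1,\theta_2}$ is a piecewise affine (in fact piecewise isometric, since each $T_i$ is a reflection-type involution) circle homeomorphism whose rotation number can be read off combinatorially --- essentially by unfolding the square into a translation-surface / lattice picture, where the chess billiard orbit becomes a straight line and the rotation number is the ratio of the two slopes, appropriately normalized. I would compute $\rho$ at two specific rational directions $(\theta_1',\theta_2)$ and $(\theta_1'',\theta_2)$ and check the two values differ. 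Then Proposition~\ref{prop-cont}(1)--(2): monotonicity plus continuity (the interval $(\theta_1',\theta_1'')$ can be chosen to avoid the two exceptional values, the horizontal and vertical directions) forces $\rho$ to assume every value in between, in particular uncountably many irrational values; pick one.

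The main obstacle I anticipate is the explicit rotation-number computation at rational directions in the square, i.e.\ setting up the unfolding/combinatorics carefully enough to be sure the two chosen values are genuinely unequal and that the parameter interval between them contains no exceptional direction. A secondary subtlety is the continuity statement: Proposition~\ref{prop-cont} gives continuity of $\theta_1 \mapsto \rho$ only at non-exceptional $\theta_1$, and in the square the exceptional directions (parallel to a side) are genuinely present, so I must restrict attention to an interval of $\theta_1$ strictly between two such directions, where $T_1$ depends continuously and monotonically on $\theta_1$ and hence so does $\S_{\theta_1,\theta_2}$ and its rotation number. Once monotonicity and continuity hold on that interval and the endpoint rotation numbers differ, the intermediate value theorem delivers an irrational value, and Corollary~\ref{cor-rotation} then gives aperiodicity.
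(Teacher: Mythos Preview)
Your approach is essentially the paper's: exhibit two pairs of directions with distinct rational rotation numbers, connect them by a path in parameter space avoiding the exceptional (axis-parallel) directions, and apply continuity of the rotation number together with the intermediate value theorem. The paper carries this out concretely with $(\theta_1,\theta_2)=(\pi/4,\,3\pi/4)$, where every orbit has period $2$, and $(\arctan(1/3),\,\arctan(-2/3))$, where an explicit period-$3$ orbit is exhibited; it then joins these two points by a straight segment in the $(\theta_1,\theta_2)$-plane (so both coordinates vary, and monotonicity is never invoked---only continuity).

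Two small caveats on your write-up. First, Theorem~\ref{thm-all} is stated only for \emph{strictly} convex tables and its proof uses Corollary~\ref{cor1}, which fails for the square (there are whole open sets of directions with fixed points); you seem aware of this, and your second-paragraph plan of computing $\rho$ at two explicit anchors is exactly what is needed---and is what the paper does. Second, the maps $T_i$ on the square are piecewise affine in arc-length but \emph{not} piecewise isometric except for the diagonal slopes: the derivative of $T_i$ on a piece joining a horizontal side to a vertical side has modulus $|\tan\theta_i|^{\pm 1}$, not $1$. This error is harmless for the argument, since all you need is continuity of $\S_{\theta_1,\theta_2}$ in the parameters, but the ``unfolding to a straight line / rotation number equals slope ratio'' picture you sketch does not literally hold here.
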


\begin{proof} 
If $\theta_1 = \pi/4$ and $\theta_2 = 3\pi/4$ then all $S$-orbits in the square have period 2.
On the other hand  if $\tan{\theta'_1} = 1/3$ and $\tan{\theta'_2} = -2/3$ then a simple geometric exercise
shows that the orbit of the point $(1,1/2)$ is a period 3 orbit (see \cite{HM}, Figure 9).
Consider the line segment $L \subset \R^2$ with endpoints  $(\pi/4,3\pi/4)$ and $(\arctan(1/3),\arctan(-2/3))$.  The  function
$(\theta_1'',\theta_2'') \mapsto S_{\theta_1'',\theta_2''}$ is a continuous function when  $(\theta_1'',\theta_2'') \in L$ since $L$ does not intersect the set of exceptional directions.. Therefore
the rotation number 
$\rho(S_{\theta''_1,\theta''_2})$ is a continuous function of $(\theta''_1,\theta''_2)$, and thus it takes all values between $1/2$ and $1/3$.
\end{proof}

\begin{theorem}\label{thm-sq} The chess billiard map $S_{\theta_1,\theta_2} $ in the square 
has a periodic point for an open dense set of $(\theta_1,\theta_2) \in \Sb1 \times \Sb1$.
\end{theorem}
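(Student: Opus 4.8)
The plan is to exhibit an open dense subset of $\{(\theta_1,\theta_2):\S_{\theta_1,\theta_2}\text{ has a periodic point}\}$, the natural candidate being the set $R$ of \emph{transverse} directions: those with $\rho(\S)=p/q$ in lowest terms for which, for a suitable lift, the displacement $h_{\theta_1,\theta_2}(x):=\tilde\S^{\,q}(x)-x-p$ takes both a strictly positive and a strictly negative value. Such a direction has a periodic point (a zero of $h$), and since $(\theta_1,\theta_2)\mapsto\S_{\theta_1,\theta_2}$ is continuous off the nowhere dense set of exceptional directions, $h_{\theta_1,\theta_2}$ varies continuously, and being somewhere positive and somewhere negative is preserved under a $C^0$-small perturbation. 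Hence $R$ is open and contained in the periodic set, and it remains to show $R$ is dense.

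For density I would argue by contradiction, assuming an open set $W$ misses $R$; fix $(\theta_1^0,\theta_2^0)\in W$, non-exceptional. By Proposition \ref{prop-cont} the map $\theta_1\mapsto\rho(\S_{\theta_1,\theta_2^0})$ is monotone and continuous, and the family $\theta_1\mapsto\S_{\theta_1,\theta_2^0}$ is pointwise monotone (as in the proof of Proposition \ref{prop-cont}), so $h_{\theta_1,\theta_2^0}$ moves monotonically with $\theta_1$. If $\theta_1\mapsto\rho$ had a nondegenerate plateau at some rational $p/q$ near $\theta_1^0$, then just outside that plateau $h$ would be strictly negative everywhere on one side and strictly positive everywhere on the other, so at any parameter inside the plateau $h$ would be strictly negative at the zero inherited from one endpoint and strictly positive at the zero inherited from the other — a transverse direction in $W$, contradiction. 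Hence $\rho(\cdot,\theta_2^0)$ crosses each rational value only at isolated parameters; at such a parameter $h$ is squeezed between the strictly negative and strictly positive values on its two sides, so $h\equiv 0$, i.e.\ $\S^q\equiv\mathrm{id}$. The same discussion shows $\rho(\cdot,\theta_2^0)$ takes an irrational value on no interval, so either $\rho(\theta_1^0,\theta_2^0)$ is rational with $\S^q\equiv\mathrm{id}$ there, or it is irrational.

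To turn this into a contradiction I would use a rigidity lemma: \emph{if $f\le g$ are orientation-preserving circle homeomorphisms, $f$ is minimal or of finite order $q$, and $\rho(f)=\rho(g)$, then $f=g$} — for if $f<g$ on a nonempty arc $U$ then $\tilde g^{\,n}-\tilde f^{\,n}$ is non-decreasing and increases by a fixed positive amount along the positive-lower-density set of return times of the $f$-orbit to $U$ (in the finite order case the displacement cocycle $\sum_j(\tilde g-\tilde f)(\tilde f^{\,j}(x))$ must vanish), forcing $\rho(g)>\rho(f)$. Since $\S$ in a polygon is piecewise affine, $\log\S'$ has bounded variation, so Denjoy's theorem applies and excludes the nowhere-dense alternative of Corollary \ref{cor-rotation}; thus an open set of directions with a common irrational rotation number would, on a slice $\{\theta_2=\theta_2^0\}$, give an interval of pointwise-comparable minimal maps of equal rotation number, hence all equal by the lemma, contradicting injectivity of $\theta_1\mapsto T_{1,\theta_1}$ and hence of $\theta_1\mapsto\S_{\theta_1,\theta_2^0}=T_{2,\theta_2^0}\circ T_{1,\theta_1}$; and the finite-order case of the lemma shows likewise that $\{\S^q\equiv\mathrm{id}\}$ has empty interior for each $q$. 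Consequently every rational-$\rho$ direction in $W$ lies in the meager set $\bigcup_q\{\S^q\equiv\mathrm{id}\}$ — all of whose points (for $q\ge 2$) carry neutral periodic orbits — and, combining the smallness of that set (Proposition \ref{prop-tre}) with the empty interior of $\{\rho\text{ irrational}\}$, we conclude $W$ has empty interior, the desired contradiction.

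I expect the genuine difficulty to be this final step: the directions with only non-transverse periodic orbits lie along the rational level sets of the strictly monotone function $\rho|_W$, and although the rigidity lemma kills any one of them on an open set, ruling out that their union is topologically large is exactly what forces us to invoke the quantitative smallness of Proposition \ref{prop-tre} (or, failing that, an ad hoc argument using the explicit affine structure of the square). The supporting facts — continuity and pointwise monotonicity of the family in each variable, the bounded-variation/Denjoy property of the polygonal chess billiard, and the nowhere-density of exceptional directions — are routine but need to be assembled carefully to make the plateau analysis rigorous.
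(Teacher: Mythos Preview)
Your openness argument is sound and parallels the paper's, but the density argument has a genuine gap in the final Baire-category step, and the paper's proof takes a completely different route.

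The gap: you decompose the putative open set $W$ as $W = A \cup B$, where $A = W \cap \{\rho \in \Q\}$ sits inside the neutral-periodic locus (hence, by Proposition~\ref{prop-tre}, inside a meager set) and $B = W \cap \{\rho \notin \Q\}$ has empty interior. But ``meager $\cup$ empty interior'' does not force empty interior: $\Q^2 \cup (\R^2 \setminus \Q^2) = \R^2$ is the model obstruction. Concretely, your plateau analysis shows that on each slice $\{\theta_2 = \theta_2^0\}$ the rotation number is \emph{strictly} monotone throughout $W$, so rational values are hit on a countable dense set and irrational values on a co-countable set --- both dense, neither open. Proposition~\ref{prop-tre} only tells you the rational part lies in countably many curves, which is entirely consistent with this picture. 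You correctly flag this step as the real difficulty, but the invocation of Proposition~\ref{prop-tre} does not close it. (A minor point: your rigidity lemma is false as stated in the finite-order case --- take $f = \mathrm{id}$ and $g$ with a single parabolic fixed point --- though your actual application, where \emph{both} $f^q = g^q = \mathrm{id}$, is valid since $f \le g$ and $f^q = g^q$ do force $f = g$.)

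The paper avoids this entirely by working with an explicit first-return map $F$ to the diagonal $D$ of the square, which (thanks to the central symmetry) is piecewise affine with only three pieces and slopes $1, \lambda, 1/\lambda$. The key technical ingredient is a \emph{quantitative} derivative bound $\partial F / \partial \phi_2 \le C < 0$ (Lemma~\ref{ll2}), uniform in the base point. For density at an irrational-$\rho$ parameter, one picks a recurrent point $z$ with $|F^n(z) - z| < \varepsilon$ and writes $F_h^n(z) - z$ as a sum of three terms, two of which are monotone in $h$ and one of which moves at speed at least $|C|$; this forces a zero (hence a periodic point) at some $h$ with $|h|$ controlled by $\varepsilon$. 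The definite speed bound is exactly what replaces the soft monotone-family reasoning and makes the argument close. Your abstract approach, even if completed, would still need some square-specific input of comparable strength --- Proposition~\ref{prop-tre} itself is proved \emph{using} this monotonicity from Lemma~\ref{ll2}, so invoking it does not save you the geometric work.
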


The proof uses another cross-section to the chess billiard flow, which relies on the symmetries of the square.
Throughout the proof we suppose that  
the directions $\theta_1$ and $\theta_2$ are not exceptional,  and furthermore we suppose that they
are in different quadrants, since if they are in the same quadrant 
the map has a fixed point.
It suffices to  treat the case $\theta_1 \in (0, \pi/2)$ and
$\theta_2  \in (\pi/2,\pi)$.

Let $D$ denote  the diagonal  $x+y =1$ of the square. We define a  map $F: D \to D$. We give two different
descriptions of this map.   
Start at a point in $D$, flow in the direction $\theta_1$ (towards the right) until we reach the boundary of the square, then flow in the direction 
$\theta_2$ until we return to the boundary of the square,
and finally again flow in the direction $\theta_1$ until we return to $D$.  The point we have returned to is in $D$, but we can be flowing either
to the right or to the left depending on if the flow in the direction $\theta_2$ had crossed the diagonal or not; 
if we are flowing to the right call this point $F(x)$ while if we are flowing to the left we apply a central symmetric to obtain $F(x)$.

\begin{figure}[h]
\begin{minipage}[ht]{.296\linewidth}
\begin{tikzpicture}[x=5cm,y=5cm];
     \draw[red,thick] (0,0) -- (0.75,0.435);
\foreach \i [count=\j from 0] in {-15}{
   \draw [pattern=rotated hatch, pattern angle=\i] 
     ({mod(\j,4)}, {floor(\j/4)}) rectangle ++(0.75,0.75);   }
 \end{tikzpicture}
\end{minipage}\nolinebreak
\begin{minipage}[ht]{.2956\linewidth}
   \begin{tikzpicture}[x=5cm,y=5cm];
   \draw[thick] (0.75,0) --(0,0.75);
   \draw[red,thick] (0,0.435) -- (0.184,0.566);
\node at (0.33,0.55) {\tiny $A'_2$};
\node at (0.6,0.25) {\tiny $A'_3$};
   \draw[red,thick] (0.44,0.31) --(0,0);
     \foreach \i [count=\j from 0] in {-10}{
   \draw [pattern=rotated hatch, pattern angle=\i] 
     ({mod(\j,4)} , {floor(\j/4)}) rectangle ++(0.75,0.75);   }
\end{tikzpicture}
\end{minipage}\nolinebreak
\begin{minipage}[ht]{.296\linewidth}
\begin{tikzpicture}[x=5cm,y=5cm];
\foreach \i [count=\j from 0] in {-15}{
   \draw [pattern=rotated hatch, pattern angle=\i] 
     ({mod(\j,4)}, {floor(\j/4)}) rectangle ++(0.75,0.75);   }
 \end{tikzpicture}
\end{minipage}\nolinebreak
\vspace{-0.6mm}
\begin{minipage}[ht]{.296\linewidth}
\begin{tikzpicture}[x=5cm,y=5cm];
\draw[thick] (0.75,0) --(0,0.75);
\draw[red,thick] (0.31,0.44) --(0.75,0.75);
   \draw[red,thick]  (0.75,0.315) -- (0.566,0.184);
\node at (0.1,0.55) {\tiny $A_2$};
\node at (0.34,0.31) {\tiny $A_3$};
\node at (0.58,0.08) {\tiny $A_1$};
\foreach \i [count=\j from 0] in {-10}{
   \draw [pattern=rotated hatch, pattern angle=\i] 
     ({mod(\j,4)}, {floor(\j/4)}) rectangle ++(0.75,0.75);   }
 \end{tikzpicture}
\end{minipage}\nolinebreak
\begin{minipage}[ht]{.2956\linewidth}
     \begin{tikzpicture}[x=5cm,y=5cm];  
     \draw[red,thick] (0,0) -- (0.75,0.435);
     \draw[red,thick] (0,0.315) -- (0.75,0.75);
      \foreach \i [count=\j from 0] in {-15}{
   \draw [pattern=rotated hatch, pattern angle=\i] 
     ({mod(\j,4)} , {floor(\j/4)}) rectangle ++(0.75,0.75);   }
\end{tikzpicture}
\end{minipage}\nolinebreak
\begin{minipage}[ht]{.296\linewidth}
\begin{tikzpicture}[x=5cm,y=5cm];
\draw[thick] (0.75,0) --(0,0.75);
     \draw[red,thick] (0,0.435) -- (0.184,0.566);
\node at (0.13,0.68) {\tiny $A'_1$};
\foreach \i [count=\j from 0] in {-10}{
   \draw [pattern=rotated hatch, pattern angle=\i] 
     ({mod(\j,4)}, {floor(\j/4)}) rectangle ++(0.75,0.75);   }
 \end{tikzpicture}
\end{minipage}

\caption{The map $F: D \to D$, case $\phi_2 \in (0,\pi/4)$}
\label{fig-unfold1}
\end{figure}
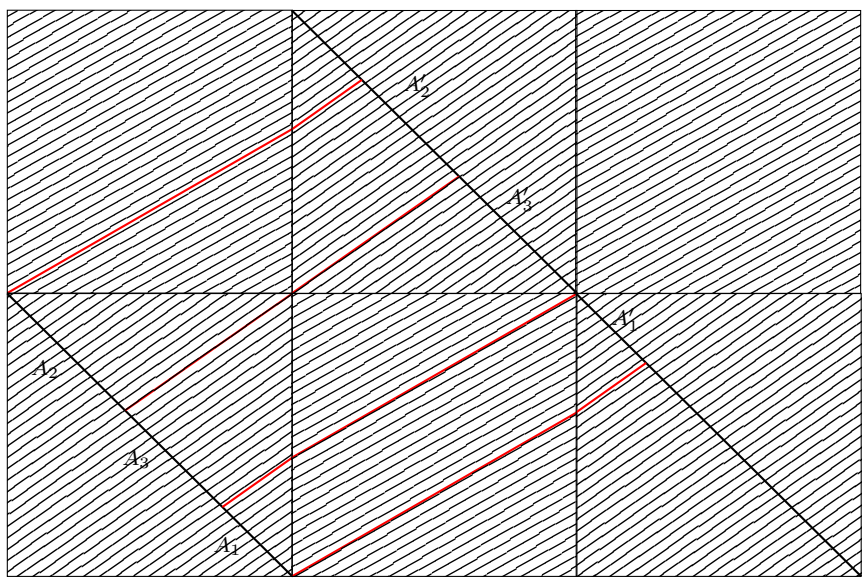

\begin{figure}[h]
\begin{minipage}[h]{.296\linewidth}
\begin{tikzpicture}[x=5cm,y=5cm];
\draw[thick] (0.75,0) --(0,0.75);
   \node at (0.65,0.21) {\tiny $A'_1$};
     \draw[red,thick]  (0.13,0) -- (0.495,0.255);
\foreach \i [count=\j from 0] in {-10}{
   \draw [pattern=rotated hatch, pattern angle=\i] 
     ({mod(\j,4)}, {floor(\j/4)}) rectangle ++(0.75,0.75);   }
 \end{tikzpicture}
\end{minipage}\nolinebreak
\begin{minipage}[ht]{.2956\linewidth}
     \begin{tikzpicture}[x=5cm,y=5cm];
      \foreach \i [count=\j from 0] in {35}{
   \draw [pattern=rotated hatch, pattern angle=\i] 
     ({mod(\j,4)} , {floor(\j/4)}) rectangle ++(0.75,0.75);   }
\end{tikzpicture}
\end{minipage}\nolinebreak

\vspace{-0.6mm}

\begin{minipage}[ht]{.296\linewidth}
\begin{tikzpicture}[x=5cm,y=5cm];
     \draw[red,thick] (0,0) -- (0.13,0.75);
     \draw[red,thick] (0.75,0.75) -- (0.62,0);
\foreach \i [count=\j from 0] in {35}{
   \draw [pattern=rotated hatch, pattern angle=\i] 
     ({mod(\j,4)}, {floor(\j/4)}) rectangle ++(0.75,0.75);   }
 \end{tikzpicture}
\end{minipage}\nolinebreak
\begin{minipage}[ht]{.2956\linewidth}
   \begin{tikzpicture}[x=5cm,y=5cm];
   \draw[thick] (0.75,0) --(0,0.75);

\node at (0.33,0.55) {\tiny $A'_2$};
\node at (0.52,0.35) {\tiny $A'_3$};
   \draw[red,thick] (0.44,0.31) --(0,0);
        \draw[red,thick]  (0.13,0) -- (0.495,0.255);
     \foreach \i [count=\j from 0] in {-10}{
   \draw [pattern=rotated hatch, pattern angle=\i] 
     ({mod(\j,4)} , {floor(\j/4)}) rectangle ++(0.75,0.75);   }
\end{tikzpicture}
\end{minipage}\nolinebreak

\vspace{-0.6mm}

\begin{minipage}[ht]{.296\linewidth}
\begin{tikzpicture}[x=5cm,y=5cm];
\draw[thick] (0.75,0) --(0,0.75);
\draw[red,thick] (0.31,0.44) --(0.75,0.75);

\node at (0.1,0.55) {\tiny $A_1$};
\node at (0.24,0.41) {\tiny $A_2$};
\node at (0.5,0.15) {\tiny $A_3$};

   \draw[red,thick] (0.255,0.495) -- ( 0.62,0.75);
\foreach \i [count=\j from 0] in {-10}{
   \draw [pattern=rotated hatch, pattern angle=\i] 
     ({mod(\j,4)}, {floor(\j/4)}) rectangle ++(0.75,0.75);   }
 \end{tikzpicture}
\end{minipage}\nolinebreak
\begin{minipage}[ht]{.2956\linewidth}
     \begin{tikzpicture}[x=5cm,y=5cm];
     \draw[red,thick] (0,0) -- (0.13,0.75);
      \foreach \i [count=\j from 0] in {35}{
   \draw [pattern=rotated hatch, pattern angle=\i] 
     ({mod(\j,4)} , {floor(\j/4)}) rectangle ++(0.75,0.75);   }
\end{tikzpicture}
\end{minipage}
\caption{The map $F: D \to D$, case $\phi_2 \in (\pi/4,\pi/2)$}
\label{fig-unfold2}
\end{figure}

 Another way to define $F$ is via unfolding,
this is shown in Figures \ref{fig-unfold1} and \ref{fig-unfold2}. 
The direction in the bottom left
square and in every other square  is $\theta_1$. The direction in the other squares is unfolded, 
thus the angle is $\pi -\theta_2$.  
For conveniences we use the  notation $\phi_1 = \theta_1, \phi_2 = \pi - \theta_2$.
We remark that the points in the interval $A_1$ have crossed the diagonal $D$ during the flow in the direction $\theta_2$, and arrive to $D$ with the same orientation; while the points in $A_2 \cup A_3$ do not cross $D$.  In the original chess billiard flow when they return to $D$ we need to apply
the central symmetry to define $F$, but this is not needed in the unfolded picture.

\begin{figure}[h]
\begin{minipage}{0.45\linewidth}
\begin{tikzpicture}[scale=1.5]

\draw [thick,decorate,decoration={brace,amplitude=10pt,mirror},xshift=0.4pt,yshift=-0.4pt](0,0) -- (1,0) node[black,midway,yshift=-0.6cm] {\tiny $A_2$};
\draw [thick,decorate,decoration={brace,amplitude=10pt,mirror},xshift=0.4pt,yshift=-0.4pt](1,0) -- (1.5,0) node[black,midway,yshift=-0.6cm] {\tiny $A_3$};
\draw [thick,decorate,decoration={brace,amplitude=10pt,mirror},xshift=0.4pt,yshift=-0.4pt](1.5,0) -- (3,0) node[black,midway,yshift=-0.6cm] {\tiny $A_1$};

\draw [thick,decorate,decoration={brace,amplitude=10pt},xshift=0.4pt,yshift=-0.4pt](0,2) -- (0,3) node[black,midway,xshift=-.6cm] {\tiny $A'_3 $};
\draw [thick,decorate,decoration={brace,amplitude=10pt},xshift=0.4pt,yshift=-0.4pt](0,1.5) -- (0,2) node[black,midway,xshift=-.6cm] {\tiny $A'_2 $};
\draw [thick,decorate,decoration={brace,amplitude=10pt},xshift=0.4pt,yshift=-0.4pt](0,0) -- (0,1.5) node[black,midway,xshift=-.6cm] {\tiny $A'_1$};

\draw[] (3,0) -- (0,0) -- (0,3);

\draw[dotted] (1,0) -- (1,3);
\draw[dotted] (0,3) -- (3,0);
\draw[dotted] (1.5,0) -- (1.5,3);
\draw[dotted] (3,0) -- (3,3);
\draw[dotted] (0,1.5) -- (3,1.5);
\draw[dotted] (0,2) -- (3,2);
\draw[dotted] (0,3) -- (3,3);

\draw[] (1,2) -- (1.5,3);
\draw[]  (0,1.5) -- (1,2);
\draw[] (1.5,0) -- (3,1.5);
\end{tikzpicture}
\end{minipage}\qquad \nolinebreak
%%%%%%%%%%%%%%%%%%%%%%%
\begin{minipage}{0.45\linewidth}
\begin{tikzpicture}[scale=1.5]

\draw [thick,decorate,decoration={brace,amplitude=10pt,mirror},xshift=0.4pt,yshift=-0.4pt](0,0) -- (1,0) node[black,midway,yshift=-0.6cm] {\tiny $A_1$};
\draw [thick,decorate,decoration={brace,amplitude=10pt,mirror},xshift=0.4pt,yshift=-0.4pt](1,0) -- (1.5,0) node[black,midway,yshift=-0.6cm] {\tiny $A_2$};
\draw [thick,decorate,decoration={brace,amplitude=10pt,mirror},xshift=0.4pt,yshift=-0.4pt](1.5,0) -- (3,0) node[black,midway,yshift=-0.6cm] {\tiny $A_3$};

\draw [thick,decorate,decoration={brace,amplitude=10pt},xshift=0.4pt,yshift=-0.4pt](0,2) -- (0,3) node[black,midway,xshift=-.6cm] {\tiny $A'_1 $};
\draw [thick,decorate,decoration={brace,amplitude=10pt},xshift=0.4pt,yshift=-0.4pt](0,1.5) -- (0,2) node[black,midway,xshift=-.6cm] {\tiny $A'_3 $};
\draw [thick,decorate,decoration={brace,amplitude=10pt},xshift=0.4pt,yshift=-0.4pt](0,0) -- (0,1.5) node[black,midway,xshift=-.6cm] {\tiny $A'_2$};

\draw[] (3,0) -- (0,0) -- (0,3);

\draw[dotted] (1,0) -- (1,3);
\draw[dotted] (0,3) -- (3,0);
\draw[dotted] (1.5,0) -- (1.5,3);
\draw[dotted] (3,0) -- (3,3);
\draw[dotted] (0,1.5) -- (3,1.5);
\draw[dotted] (0,2) -- (3,2);
\draw[dotted] (0,3) -- (3,3);

\draw[] (0,2) -- (1,3);
\draw[] (1,0) -- (1.5,1.5);
\draw[] (1.5,1.5) -- (3,2);
\end{tikzpicture}
\end{minipage}\nolinebreak
\caption{The map $F$ for $\phi_2 \in (0,\pi/4)$ and $(\pi/4,\pi/2)$.}\label{fig-F}
\end{figure}

 The graph of the map $F: D \to D$ has two possible forms, they are
 shown in Figure \ref{fig-F}.  
The set $D$ decomposes into three segments $A_1,A_2,A_3$ such that the derivative $F'|_{A_i}$ is constant for each $i$; we call their images $A'_i = F(A_i)$.
 Let $a_i := |A_i|$, where $| \cdot |$ denotes the length of a segment.  In the case $\pi/4 < \phi_2 < \pi/2$ 
the central
symmetry of Figure \ref{fig-unfold2} about the point $(1/2,3/2)$ implies $|A'_1| = |A_1|$;
while the central symmetry of the figure about the point $(1,1)$ yields $|A'_2|=|A_3|$; $|A'_3|=|A_2|$ and thus 
$$\frac{|A'_2|}{|A_2|} = \frac{|A_3|}{|A'_3|}.$$
Notice that these symmetries imply that the point $(a_1 + a_2, F(a_1 + a_2))$ of the graph of $F$ lies on the anti-diagonal marked in dots in Figure \ref{fig-F}, i.e.,
$a_1 + a_2 + F(a_1+a_2) = 1$. (Similar symmetries arise in the case $\phi_2 \in (0,\pi/4)$).
 
The length of $D$ is $\sqrt{2}$/ We parametrize $D$ with arclength and note that $F(0) = F(\sqrt{2})$, thus we
think of  $D$ as a circle of length $\sqrt{2}$ which we do not normalize. 
Elementary plane geometry  (see Figures \ref{fig-unfold1} and \ref{fig-unfold2}) yields
\begin{align*}
a_1 & =  \big  ( 1 - \tan(\phi_2) \big ) \frac{\sin(\pi/2 - \phi_1)}{\sin(\pi/4 + \phi_1)} & \text{ if }  & 0 < \phi_2 < \frac{\pi}{4}\\
a_1 &= \big  ( 1 - \cot(\phi_2) \big ) \frac{\sin(\phi_1)}{\sin(3\pi/4 - \phi_1)} & \text{ if } &  \frac{\pi}{4} < \phi_2 < \frac{\pi}{2}.
\end{align*}

Remark:  if $\phi_2 = \pi/4$ the interval $A_1$ disappears, and there are only two intervals; on the other hand
if $\phi_1 = \phi_2$, then $F$ is a circle rotation by the length $a_1 \pmod{\sqrt{2}}$.

It is not hard to check that if we increase $\phi_2$ (i.e., decrease $\theta_2$) then the graphs of the resulting maps 
$F = F_{\phi_1,\phi_2}$ and $F_{h} = F_{\phi_1,\phi_2+ h}$ do not intersect
(see Figure \ref{fig-FF}).

\begin{figure}[h]
\begin{tikzpicture}[scale=1.5]

\draw[] (3,0) -- (0,0) -- (0,3);
\draw[dotted] (0,3) -- (3,0);
\draw[dotted] (3,0) -- (3,3);
\draw[dotted] (0,3) -- (3,3);

\draw[] (0,2) -- (1,3);
\draw[] (1,0) -- (1.5,1.5);h
\draw[] (1.5,1.5) -- (3,2);

\draw[red] (0,1.9) -- (1.1,3);$$ 
\draw[red] (1.1,0) -- (1.65,1.35);
\draw[red] (1.65,1.35) -- (3,1.9);
\end{tikzpicture}
\caption{The original map is in black, and the red map arises by increasing $\phi_2$ a bit.}\label{fig-FF}
\end{figure}
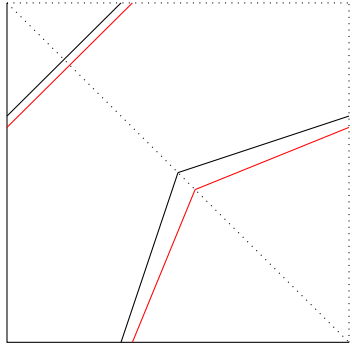

 \begin{lemma}\label{ll2} If $\phi_1 \in (0,\pi/2)$ and  $z \in D$,  then 
   $$\frac{\partial{F(z)}}{\partial{\phi_2}}  \le    \frac{- \sin(\phi_1)}{\sin(3\pi/4 - \phi_1)  } .$$
 \end{lemma}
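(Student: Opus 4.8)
The statement is a derivative bound on the return map $F = F_{\phi_1,\phi_2}$ as a function of $\phi_2$, claimed uniformly in $z \in D$ and for all $\phi_1 \in (0,\pi/2)$. The natural approach is to compute $\partial F(z)/\partial \phi_2$ piecewise on the three intervals $A_1, A_2, A_3$ (and on the two intervals when $\phi_2 = \pi/4$), using the explicit unfolding description in Figures \ref{fig-unfold1} and \ref{fig-unfold2} together with the formulas for $a_1$ already displayed just before the lemma. Since $F$ is piecewise affine in $z$ with slope $F'|_{A_i}$ constant, the dependence of $F(z)$ on $\phi_2$ enters through (i) the endpoints of the $A_i$ — in particular $a_1 = a_1(\phi_1,\phi_2)$ — and (ii) the slopes $F'|_{A_i}$ and the image endpoints. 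The cleanest route is to fix a piece $A_i$, write $F(z)$ on that piece as an affine function of $z$ with coefficients depending on $\phi_1,\phi_2$, differentiate in $\phi_2$, and check the inequality on each piece separately; because the bound is one-sided ($\le$), it suffices to find, on each piece, the worst case and show it is still $\le -\sin(\phi_1)/\sin(3\pi/4 - \phi_1)$.

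**Key steps, in order.** First, I would reduce to the case $\pi/4 < \phi_2 < \pi/2$ and handle $\phi_2 \in (0,\pi/4)$ analogously (the right-hand side of the inequality, $-\sin(\phi_1)/\sin(3\pi/4-\phi_1)$, matches the second formula for $a_1$, which is a strong hint that the extremal piece is the one governed by that formula). Second, from the unfolded picture I would read off that on the interval $A_1$ — the points that crossed the diagonal — $F$ is a translation: $F(z) = z + c(\phi_1,\phi_2)$ with $c$ essentially $a_1$ up to an additive constant coming from the geometry, so $\partial F(z)/\partial\phi_2 = \partial a_1/\partial \phi_2$ (up to sign bookkeeping from the $\pmod{\sqrt 2}$ identification). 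Differentiating $a_1 = (1 - \cot\phi_2)\,\sin(\phi_1)/\sin(3\pi/4 - \phi_1)$ in $\phi_2$ gives $\partial a_1/\partial\phi_2 = \csc^2(\phi_2)\cdot\sin(\phi_1)/\sin(3\pi/4-\phi_1)$, which is \emph{positive}; so the translation part is not where the negative bound comes from — rather, increasing $\phi_2$ shrinks some pieces and the image moves down. Third, I would instead track $F(z)$ directly via the "flow in direction $\theta_1$, then $\theta_2=\pi-\phi_2$, then $\theta_1$" description: along a trajectory, the horizontal displacement produced by the middle (unfolded) leg at slope related to $\phi_2$ is where $\partial/\partial\phi_2$ produces a definite-sign contribution, and elementary trigonometry of that leg (the same computation that produced the $a_1$ formulas) yields exactly $\partial F/\partial\phi_2 = -\sin\phi_1/\sin(3\pi/4-\phi_1)$ on the relevant piece, with the other pieces giving something more negative (hence still satisfying $\le$). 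Fourth, I would verify the monotonicity claim already asserted around Figure \ref{fig-FF} (graphs of $F_{\phi_1,\phi_2}$ and $F_{\phi_1,\phi_2+h}$ do not cross) is consistent with — indeed follows from — the sign of this derivative, and check the boundary/degenerate cases $\phi_2 = \pi/4$ and $\phi_1 = \phi_2$ separately (in the latter, $F$ is a rotation by $a_1$, and one differentiates the rotation length directly).

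**Main obstacle.** The bookkeeping: $F$ is defined with a conditional central symmetry ("flowing to the right" vs. "to the left"), and $D$ is treated as an unnormalized circle of length $\sqrt 2$, so a careless computation can drop a sign or an additive constant and turn the sharp bound into its negative. The real content is to identify \emph{which} of the three pieces $A_i$ realizes the extreme value of $\partial F/\partial\phi_2$ and to show the other two are bounded by it; I expect the extremal piece to be exactly the one whose geometry is controlled by $\sin(3\pi/4 - \phi_1)$ in the denominator (matching the second $a_1$-formula), which is why the bound takes that precise form. Once the correct piece is isolated, the inequality should reduce to an identity from plane geometry of a single trajectory segment, with equality attained — so the lemma is sharp, not merely an estimate, and the proof should say so. The remaining pieces and the $\phi_2 \in (0,\pi/4)$ case will be routine variations once the template is set up. I would write the proof as: set up the unfolded trajectory, compute $\partial F/\partial\phi_2$ on each $A_i$ via the explicit leg lengths, observe equality on the extremal piece and strict inequality (in the right direction) on the others, and conclude.
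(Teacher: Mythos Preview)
Your overall piecewise strategy is sound in principle, but you make a sign error at the crucial step that derails the argument. On $A_1$ (in the regime $\phi_2\in(\pi/4,\pi/2)$) the slope is $1$ because $|A'_1|=|A_1|$, and since $A'_1$ is the \emph{top} interval $[\sqrt 2 - a_1,\sqrt 2]$, the translation is $F(z)=z+(\sqrt 2 - a_1)$ for $z\in A_1$. Hence $\partial F(z)/\partial\phi_2 = -\,\partial a_1/\partial\phi_2 = -\csc^2(\phi_2)\,\sin(\phi_1)/\sin(3\pi/4-\phi_1)$, which is \emph{negative} and, since $\csc^2(\phi_2)\ge 1$, already at or below the claimed bound. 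So the $A_1$ piece \emph{is} where the bound comes from, contrary to your conclusion that ``the translation part is not where the negative bound comes from.'' Your hedge ``up to sign bookkeeping'' was correct to flag but you then proceeded with the wrong sign, which sends you back to the flow description unnecessarily.

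The paper's proof takes a shorter route than your piecewise plan: it computes $\partial F/\partial\phi_2$ only at the single point $z=0$ (obtaining exactly $-\csc^2(\phi_2)\sin(\phi_1)/\sin(3\pi/4-\phi_1)$ via $F(0)=\sqrt 2-a_1$), and then asserts the comparison $F_h(z)-F(z)\le F_h(0)-F(0)$ for $h>0$ (and the reverse for $h<0$) to transfer the bound to every $z\in D$. This avoids computing on $A_2$ and $A_3$ separately. Your piecewise approach would also work once the sign is fixed, but it requires carrying out the explicit affine formulas on $A_2$ and $A_3$ (where the slope is not $1$), which is more laborious than the paper's single-point-plus-comparison argument.
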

 
\begin{proof} The constant $a_1$ defined above varies with $h$, we denote this dependence by $a_1(h)$.  
Suppose $h$ is such that $\phi_2 + h \in (\pi/4,\pi/2)$, then
\begin{align*}
  F_{h}(0) - F(0)  & =  a_1 -  a_1(h) \\
& =   \Big ( \cot(\phi_2 + h )  - \cot(\phi_2 )   \Big)  \frac{\sin(\phi_1)}{\sin(3\pi/4 - \phi_1)},
\end{align*}
and thus 
\begin{align*} \frac{ \partial F_{\phi_1,\phi_2}}{\partial \phi_2}(0) & = \lim_{h \to 0} 
\frac{  \cot(\phi_2 + h )  - \cot(\phi_2 )  }{h} \frac{\sin(\phi_1)}{\sin(3\pi/4 - \phi_1)}\\
& =- \csc^2{(\phi_2)} \frac{\sin(\phi_1)}{\sin(3\pi/4 - \phi_1)}\\
& \le \frac{- \sin(\phi_1)}{\sin(3\pi/4 - \phi_1)}.
\end{align*}
However
\begin{align*}
F_{h}(z) - F(z)  & \le F_{h}(0) - F(0) \text{ for } h > 0\\
F_{h}(z) - F(z)  &\ge F_{h}(0) - F(0) \text{ for } h < 0
\end{align*}  
for any $z \in D$, and thus
$$\frac{ \partial F_{\phi_1,\phi_2}}{\partial \phi_2}(z)  \le \frac{ \partial F_{\phi_1,\phi_2}}{\partial \phi_2}(0).$$
\end{proof}

\begin{proof}
{\bf Denseness:}
Suppose  that the rotation number of $F$ is irrational.  Fix $\e > 0$ satisfying $\e < \min(\pi/2-\phi_2, \phi_2 - \pi/4)  < \sqrt{2}/2$. 
 Choose a point $z \in D$ such that $z \in \omega(z)$ (the $\omega$-limit set of the orbit $z$) and fix an  $n > 0 $ so that $|F^n(z) - z|< \varepsilon$.

Remember that $D$ is a circle, we define $z_2-_Dz_1$ the signed distance between points 
$z_1,z_2 \in D$  as follows. If $0 \le z_1 \le z_2 \le \sqrt{2}$ and $z_2-z_1 < \sqrt{2}/2$
then $z_2 -_D z_1 = z_2-z_1$  while if  $0 \le z_2 \le z_1 \le  \sqrt{2}$ 
and $\sqrt{2}-(z_2-z_1) < \sqrt{2}/2$ the $z_2 -_D z_1 =  \sqrt{2}-(z_2-z_1) < \sqrt{2}/2$ and then extend to negative
distances by setting $z_1 -_D z_2 = -(z_2 -_D z_1)$. Throughout the rest of this section we will simply write $-$ instead of $-_D$.

Let $ (h^{(0)}_0,h^{(0)}_1)$ be the maximal interval containing $0$ such that the points $z$ and $F^n(z)$ are in the same
semicircle,  which allows us to define the continuous
function $e : (h^{(0)}_0,h^{(0)}_1) \to \R$ by
$e(h) :=  F^n_h(z) - z$. 

Similarly let $(h^{(i)}_0,h^{(i)}_1)$ ($i=1,2$) be the maximal intervals containing $0$  where the functions
$P_1(h) :=   F^n_h(z) - F_h(F^{n-1}(z)) $ and  $P_2(h) := F_h(F^{n-1}(z)) - F^n(z) $ are respectively  defined.

Consider the interval 
$$ 
(h^{(3)}_0,h^{(3)}_1) :=  \bigcap_{i=0}^2(h^{(i)}_0,h^{(i)}_1).
$$ 
Note that $(h^{(3)}_0,h^{(3)}_1)$ contains the points $0$ and depends on $\phi_1,\phi_2$ and on $z$ which are fixed throughout the proof but does not depend on the choice of $\e$. For all $h \in (h^{(3)}_0,h^{(3)}_1)$, we have
$$
e(h)  = P_1(h) + P_2(h)  + (F^n(z) - z).
$$ 

We need to estimate each of these terms.
We have already supposed that  $|  (F^n(z) - z) | < \e$.

Note that $P_1(0) = 0$.
The function $F^{n-1}_h(z)$ is  a decreasing function of $h$, thus 
since 
 $F_h$ is an increasing function of $z$, one obtains 
 \begin{align*}
 P_1(h)  & \le 0 \text{ if } h \ge 0\\ 
 P_1(h)  & \ge 0 \text{ if } h \le 0.
 \end{align*}

We also have $P_2(0) = 0$,  and  applying the Lemma yields
\begin{align*}
P_2(h)  & \le Ch \text{ for } h \ge 0\\
P_2(h)  &\ge -Ch \text{ for } h  \le  0
\end{align*}  
 where $C :=  \frac{- \sin(\phi_1)}{\sin(3\pi/4 - \phi_1)  } $
is a negative constant.
 
First suppose $F^n(z) - z$ is positive. The functions $P_1$ and $P_2$ are both  
 negative and continuous on the open interval  $(0,h_1^{(3)})$. Furthermore 
 $P_1(h_1^{(3)}) > 0$ and $P_2(h_1^{(3)})>0$.  Since  $h^{(3)}_1$ does not depend on $\e$,
 it follows that if $0 < \e <  P_1(h_1^{(3)})  + P_2(h_1^{(3)}) $, then there is an  $h' \in (0, h^{(3)}_1)$
 such that $e(h') = 0$, and thus $F^n_{h'}(z) = z$.
 
 The case $ F^n(z) - z < 0$ is similar, varying $h \in (h_0^{(3)},0)$.
  
Choosing $\e > 0$ arbitrarily small and remembering that  $h^{(3)}_0$ and $h^{(3)}_0$ do not depend on $\e$, yields   $h'$  arbitrarily close to $0$; showing that the periodic directions are dense.\\

 \noindent
{\bf Openess:}
Suppose that 
$z_0$  is a periodic point of period $n$ for the map $F_{\phi_1,\phi_2}$ where the directions additionally satisfy $\phi_1 \not \in \{0,\pi/2\}$
and $\phi_2 \not \in \{ 0,\pi/4,\pi/2\}$.
The graph of $y = F^n(z)$ intersects the diagonal at the  point $(z_0,z_0)$,
if this intersection is transverse then
since the graph of $F$ (and thus also the graph of $F^n$) changes continuously with  $(\phi_1,\phi_2)$ (and thus with $(\theta_1,\theta_2)$) the intersection persists for a non-empty open set of parameters. 

Now suppose
that the intersection is not transverse, then either 
(i)  $(F^n)'(z_0)$ does not exist and thus the orbit of the periodic point $z_0 = F^n(z_0)$ must pass through a corner of the polygon;
or (ii)  $(F^n)'(z_0)=1$, in this case there is an interval   $J := ( z_- , z_+)$ containing $z_0$ such 
that $F^n|_{J}$ is the identity map.

Suppose that the graph of $F^n$ stays below the diagonal except for the tangency at the point $(z_0,z_0)$, respectively on the
segment $\{(z,z): z \in J\}$.  Then since $F^n_{h}$ is decreasing,
for all sufficiently small negative $h$ the graph of $F^n_{h}$ will cross the diagonal transversely at a point near $(z_0,z_0)$, respectively at two ponts near $(z_-,z_-)$ and $(z_+,z_+)$. The case when the graph of $F^n$ is above the diagonal is treated similarly using positive $h$.
 \end{proof}

\begin{proposition}\label{prop-connection1}  In the square for  a non-exceptional direction
an orbit passing through 
 a vertex is periodic if and only if it is a connection.
 \end{proposition}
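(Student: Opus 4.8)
The plan is to reduce the statement to the retracing mechanism already used in the proof of Proposition \ref{prop-connection2}, together with the special geometry of the square. Recall the set $F=\{x\in\partial P: T_1x=x \text{ or } T_2x=x\}$ from that proof. First I would identify $F$ here: since $(\theta_1,\theta_2)$ is non-exceptional, neither $\theta_i$ is parallel to a side of the square, so by Proposition \ref{prop-fix} a point $x$ is fixed by $T_i$ exactly when some line of the $\theta_i$-foliation is a supporting line of the square at $x$; for a square such a supporting line touches it in a single vertex, and the two extreme lines of the foliation touch at a pair of opposite vertices. Hence $F$ is a set of vertices of the square: all four vertices when $\theta_1$ and $\theta_2$ lie in different quadrants, and just one pair of opposite vertices (in which case $\S$ has a fixed point) when they lie in the same quadrant. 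In the first case ``the orbit passes through a vertex'' becomes ``the orbit passes through a point of $F$'', which is exactly the setting of Proposition \ref{prop-connection2}; the same-quadrant case is dealt with separately at the end.

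For the implication ``connection $\Rightarrow$ periodic'' there is nothing new to do: a connection retraces itself between its two endpoints in $F$ and is therefore periodic, which is the content of the proof of Proposition \ref{prop-connection2}. (The hypothesis that the orbit meets a vertex is automatic here, since the endpoints of a connection lie in $F$, hence are vertices.)

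For the converse, suppose the orbit of a vertex $v$ is periodic with minimal $\S$-period $q$. If $q=1$ then $v$ is a fixed point of $\S$, i.e.\ a connection of length $0$, and we are done; so assume $q\ge 2$. I would pass to the full broken line, the $T$-orbit $(y_n)_{n\in\Z}\subset Q$ with boundary coordinates $x_n\in\partial P$; its minimal period $P$ is even (consecutive $y_n$ alternate between $Q_1$ and $Q_2$) and equals $2q$ (as $\S=T^2$ on $Q_1$). Writing $y_0=(v,1)$ with, say, $T_1v=v$, one gets $y_1=(v,2)$ and, exactly as in Proposition \ref{prop-connection2}, the orbit retraces itself about $v$: a short induction on $|n|$ gives $x_n=x_{1-n}$ for all $n$. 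Combining with $x_{n+P}=x_n$ yields a second palindrome $x_n=x_{(P+1)-n}$, now centred at the index pair $\{q,q+1\}$, so in particular $x_q=x_{q+1}$. Since $P$ is even, $y_q$ and $y_{q+1}=Ty_q$ lie in different copies, so the equality of their boundary coordinates forces $b:=x_q$ to be fixed by one of the $T_i$, i.e.\ $b\in F$; and $b\ne v$ because the orbit meets $v$ only at the double point $\{y_0,y_1\}$, whereas $q\not\equiv 0,1\pmod{2q}$ for $q\ge 2$. Thus the broken line from $v$ to $b$ is a segment with both endpoints in $F$, and the periodic orbit, which reflects at $v$ and at $b$, is a connection.

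The step that I expect to take the most care is this last paragraph: keeping track of which copy $Q_i$ each $y_n$ lives in, checking the retracing identity $x_n=x_{1-n}$ and verifying that a coincidence $x_q=x_{q+1}$ across the two copies genuinely yields a point of $F$, and handling the small cases $q=1$ and the same-quadrant configuration. For the latter: there $\mathrm{Fix}(T_1)=\mathrm{Fix}(T_2)$ is a pair of opposite vertices and $\S$ has a fixed point, so $\rho(\S)=0$ and, by Corollary \ref{cor-rotation}, every periodic orbit of $\S$ is a fixed point; hence an orbit through a vertex is periodic iff that vertex is one of the two common fixed points, iff the orbit is a length-$0$ connection.
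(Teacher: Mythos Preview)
Your proof is correct and follows the same strategy as the paper's: both use that a vertex in $F$ acts as a reflector (the orbit retraces itself there), and then argue that periodicity forces a second reflection at another point of $F$. The paper states this last step informally (``since this is the only mechanism for retracing an orbit, the only way this can happen is by retracing the orbit once again: the orbit must hit a different corner''), whereas you make it precise by the palindrome computation $x_n=x_{1-n}$, combining it with $x_{n+2q}=x_n$ to locate the second reflection exactly at index $q$ and to verify $x_q\in F$, $x_q\ne v$; you also treat the same-quadrant configuration more explicitly via $\rho(\S)=0$ and Corollary~\ref{cor-rotation}. So the route is essentially the paper's, but your version supplies the bookkeeping that the paper leaves to the reader.
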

 
 \begin{proof}
Proposition \ref{prop-connection2} yields the converse assertion of the lemma since
in the square connections must connect vertices.

Vertices which are fixed points are connections.  Now consider the case when the period of an orbit of a vertex $a$ is at least 2 and thus by Proposition \ref{prop-fix} the directions must be in different quadrants; so one of
 the directions is a supporting line at $a$. Thus $a$ acts as a reflector  in the sense that after hitting this corner the orbit 
retraces itself  backwards. 
The orbit going through the corner $a$ is periodic (and is not a fixed point), thus
it must make its way back to  $a$. Since
this is the only mechanism for retracing  an orbit, the only way this can happen is by
retracing the orbit once again:
the orbit must hit a different corner, i.e., it is a connection.\end{proof}

\begin{proposition}\label{prop-connection} In the square, 
if there is  a neutral periodic orbit in a non-exceptional direction, then there is a connection in this direction.
\end{proposition}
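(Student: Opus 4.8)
The plan is to use the piecewise affine structure of $\S|_{Q_1}$ together with the special feature of the square, already exploited in Proposition \ref{prop-connection1}, that each of the four corners is a fixed point of $T_1$ or of $T_2$. First I would reduce to the case $\theta_1\in(0,\pi/2)$ and $\theta_2\in(\pi/2,\pi)$: the only remaining non-exceptional directions have $\theta_1,\theta_2$ in one quadrant, in which case a corner is a common supporting point, so $\rho(\S)=0$, every periodic orbit has period one, and by \cite{HM} $\S$ has only finitely many (in fact two, semistable) fixed points; hence there is no neutral periodic orbit and nothing to prove. After the reduction $\S|_{Q_1}=T_2\circ T_1$ is a piecewise affine orientation preserving circle homeomorphism, $T_1$ fixes the corners $(1,0),(0,1)$ and $T_2$ fixes $(0,0),(1,1)$, so a corner not fixed by $T_1$ is fixed by $T_2$ and conversely. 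We may assume the base interval $I$ of the neutral cylinder lies in $Q_1$; the case $I\subset Q_2$ is identical after interchanging $\theta_1$ and $\theta_2$.

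The crucial step is the claim that \emph{every breakpoint of $\S|_{Q_1}$ is a corner of the square or $\S^{-1}$ of a corner}, hence lies on the $\S$-orbit of a corner. The point is that $\S|_{Q_1}=T_2\circ T_1$ can fail to be affine at $x$ only if the line through $x$ in direction $\theta_1$ passes through a corner $v$, or the line through $T_1(x)$ in direction $\theta_2$ passes through a corner $v$; a short case analysis, splitting according to whether $v$ equals $x$, equals $T_1(x)$, or is the far endpoint of the chord, and using which of $T_1,T_2$ fixes $v$, then gives $x=v$ or $\S(x)=v$. I would write these four elementary plane-geometry cases out in full.

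Now let $I$ be the base of the neutral cylinder, of period $q$, so that $\S^q(x)=x$ for every $x\in I$. Since $\S^q|_{Q_1}$ is piecewise affine and an affine map agreeing with the identity on a non-degenerate subinterval is the identity on its whole interval of affinity, the maximality clause $\S^q J\neq J$ for $J\supsetneq I$ forces $I$ to coincide, up to endpoints, with a maximal interval of affinity of $\S^q|_{Q_1}$. Hence an endpoint $z$ of $I$ satisfies $\S^{j}(z)=b$ for some $0\le j<q$ and some breakpoint $b$ of $\S$; by the crucial step, $\S^{j}(z)$ or $\S^{j+1}(z)$ is a corner $v$, so the $\S$-orbit of $z$ passes through $v$. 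Finally $z\in I$ is a neutral periodic point, so this orbit is periodic; being a periodic orbit through a corner of the square in a non-exceptional direction, it is a connection by Proposition \ref{prop-connection1}, and it is an orbit of $\S_{\theta_1,\theta_2}$, i.e. a connection in the given direction.

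The main obstacle I expect is the crucial step: verifying that the breakpoints of $\S|_{Q_1}$ all lie on $\S$-orbits of corners. It is a finite check, but it genuinely uses the geometry of the square, namely that once $\theta_1,\theta_2$ lie in different quadrants each corner supports exactly one of the two foliations. A secondary point requiring care is the identification of the base $I$ of a neutral cylinder with a full maximal affinity interval of $\S^q$, rather than a proper subinterval, which is exactly where the maximality in the definition of a periodic cylinder enters; the reduction to different quadrants is harmless and relies on the description of the same-quadrant case in \cite{HM}.
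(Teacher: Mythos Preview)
Your argument is essentially the paper's: maximality of the neutral cylinder forces the boundary of $C(I)$ to pass through a vertex, that vertex is then periodic (since $C(I)$ is closed), and Proposition~\ref{prop-connection1} finishes. The paper compresses your ``crucial step'' breakpoint analysis into the single clause ``otherwise we could extend $I$ to a larger interval'', but the content is identical. One minor slip in your reduction: the parallel case $\theta_1=\theta_2$ is non-exceptional and same-quadrant, yet $\S=\mathrm{id}$ there, so neutral periodic orbits \emph{do} exist and your appeal to ``only two fixed points'' fails; handle it separately as the paper does, observing that each vertex is then a length-$0$ connection.
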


\begin{proof}
If $\theta_1$ and $\theta_2$ are parallel, then all points are fixed by $S$, thus each side of $P$ is a neutral cylinder and each vertex of $P$ is a connection.

Suppose now that $\theta_1$ and $\theta_2$ are not parallel and $(\theta_1,\theta_2)$ is non-exceptional.
Let $q$ denote the period of the neutral cylinder and
consider a maximal interval $I$ defining the neutral periodic cylinder. 
By definition
 $C(I) = C(\S(I)) = \cdots = C(\S^{q-1}(I))$, thus we can choose $0 \le j < q$ such that
$\S^j(I) = [a,b]$, where $a$ is a 
vertex  of the polygon, otherwise we could extend $I$ to a larger interval.
Since $C(I)$ is closed, the orbit of the vertex $a$ is periodic and thus a connection by Proposition \ref{prop-connection1}.
\end{proof}

The proof shows a bit more.  If we consider the other side of the cylinder it also passes through a 
vertex, and repeating the proof shows that the orbit of this vertex is also a connection.  Thus either
there are two connections, or a single connection which bounds both sides of the cylinder.

\begin{proposition}\label{prop-tre}
For the square, the set $$\Big\{(\theta_1,\theta_2): \S  \text{ has a neutral periodic orbit} \Big \}$$ is  a union  of at most countably many
one-dimensional sets.
\end{proposition}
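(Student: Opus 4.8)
The plan is to show that the set in question is contained in the set of directions admitting a \emph{connection}, and that the latter is a countable union of one-dimensional sets. First I would dispose of the exceptional directions, since the set of these ($\theta_1$ or $\theta_2$ parallel to a side of the square) is already a finite union of one-dimensional sets. For a non-exceptional direction, Proposition~\ref{prop-connection} turns a neutral periodic orbit into a connection, and (as in the proof of Proposition~\ref{prop-connection1}) a connection in the square joins two vertices; moreover the connection has positive length, since a connection of length $0$ is a point fixed by both $\T_1$ and $\T_2$, forcing $\rho(\S)=0$, and then by Proposition~\ref{prop-fix} the fixed points are isolated, so no neutral cylinder can exist. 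Hence it suffices to prove that for every ordered pair of vertices $(v,w)$ of the square and every $m\in\N$, the set $\mathcal C_{v,w,m}$ of non-exceptional directions admitting a connection of length $m$ from $v$ to $w$ is a finite union of one-dimensional sets; the union over the finitely many pairs $(v,w)$ and over $m\in\N$ then proves the proposition.

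To analyse $\mathcal C_{v,w,m}$, fix one of the finitely many combinatorial configurations for the quadrants of $\theta_1,\theta_2$ (for which $v,w$ are fixed points of the relevant $\T_i$), say $\theta_1\in(0,\pi/2)$ and $\theta_2\in(\pi/2,\pi)$. For directions whose $\T$-orbit of $v$ meets only the interiors of the sides of the square at the intermediate steps $1,\dots,m-1$, the \emph{itinerary} (the sequence of sides visited) is locally constant, and there are finitely many possible itineraries; on the relatively open set $\mathcal U_\iota$ of directions realising a given itinerary $\iota$, each step of the orbit is an affine map from one side of the square onto another whose slope and intercept are real-analytic functions of $(\theta_1,\theta_2)$, built from $\pm1$, $\tan$ and $\cot$ of $\theta_1$ or $\theta_2$ (compare the explicit formulas for $a_1$ in the proof of Theorem~\ref{thm-sq}). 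Composing the $m$ steps, $\T^m(v)$ is a real-analytic map $\Phi_\iota$ of $(\theta_1,\theta_2)$ taking values in one fixed side of the square; identifying that side with an interval via arc length, the condition ``$\T^m(v)=w$'' becomes a single real-analytic equation $\Phi_\iota(\theta_1,\theta_2)=c$ with $c\in\{0,1\}$ the coordinate of $w$. The directions left out --- those whose orbit of $v$ hits a vertex at some step $j<m$ --- already belong to $\bigcup_{j<m}\bigcup_{w'}\mathcal C_{v,w',j}$ (since in the square every vertex is a fixed point of one of the $\T_i$, hence an endpoint of a connection), so they are absorbed by an induction on $m$.

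It remains to check that $\Phi_\iota$ is not identically equal to $c$ on $\mathcal U_\iota$: granting this, the zero set of the non-constant real-analytic function $\Phi_\iota-c$ on the two-dimensional set $\mathcal U_\iota$ is a nowhere dense real-analytic set of dimension at most $1$, hence a one-dimensional set, and $\mathcal C_{v,w,m}$ is then a finite union of such sets together with the inductively-controlled lower-length pieces, which is what we wanted. This non-degeneracy is the main obstacle. The idea is that, since $v$ is fixed by one of the $\T_i$, the orbit of $v$ alternates the two foliation directions, so at least one of its $m$ segments is flown in the direction $\theta_2$; one then shows $\partial\Phi_\iota/\partial\theta_2\not\equiv 0$ on $\mathcal U_\iota$, either directly from the trigonometric expression for $\Phi_\iota$, or, more conceptually, by the monotonicity mechanism of Lemma~\ref{ll2} and its use in the proof of Theorem~\ref{thm-sq}: replacing $\theta_2$ by $\theta_2+h$ moves the exit point of every $\theta_2$-segment strictly and monotonically, and the $P_1/P_2$-type bookkeeping there shows these perturbations reinforce rather than cancel, so $\Phi_\iota$ is in fact strictly monotone in $\theta_2$ and in particular non-constant.
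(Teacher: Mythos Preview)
Your proof is correct and follows the same overall strategy as the paper: reduce to connections via Proposition~\ref{prop-connection}, then show the connection locus is one-dimensional using the strict monotonicity of the orbit of a vertex in $\theta_2$ established in the proof of Theorem~\ref{thm-sq}. The paper's execution differs only in the final technical step: rather than decomposing by itinerary and invoking the real-analyticity of $\Phi_\iota$, it works with the diagonal cross-section $F$ and a lift $\tilde F$, sets $f(\theta_1,\theta_2)=\tilde F^n_{\theta_1,\theta_2}(a)-(a+m)$, and applies a continuous implicit function theorem due to Kumagai, using that same monotonicity to verify the local-injectivity hypothesis and conclude that the zero set is locally the graph of a continuous function $\theta_2=g(\theta_1)$. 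Your real-analytic route has the advantage of making the countable decomposition explicit (over pairs of vertices, lengths $m$, and itineraries $\iota$) and of giving a little extra structure on the connection locus; the paper's IFT route is shorter and sidesteps the itinerary bookkeeping, at the price of quoting a less standard result.
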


\begin{proof} Using the previous Proposition, it suffices to prove that the set 
$$\Big\{(\theta_1,\theta_2): \S  \text{ has a coonection in this direction} \Big \}$$ is  a union  of at most countably many
one-dimensional sets.

We will use the following implication of a strengthening of the implicit function theorem (IFT) due to Kumagai \cite{Ku}:\\
\textit{ Consider a continuous function $f:\R\times \R\to \R$ and a point $(\theta_1^0,\theta_2^0)$ such that  $f(\theta_1^0,\theta_2^0)=0$. 
If there exist open neighborhoods $C\subset \R$ and $E\subset \R$ of $\theta_1^0$ and $\theta_2^0$, respectively, such that, for all $\theta_1 \in C$, 
$f(\theta_1 ,\cdot): E\to \R$ is locally one-to-one then there exist open neighborhoods $C_{0}\subset \R$ and $E_{0}\subset \R$ of $\theta_1^0$ and $\theta_2^0$, 
such that, for every $\theta_1 \in C_{0}$, the equation $f(\theta_1, \theta_2) = 0$ has a unique solution
$\theta_2=g(\theta_1)\in E_{0}$,
where $g$ is a continuous function from $C_0$ into $E_0$.}

Consider a neutral periodic orbit in the direction $(\theta_1^0,\theta_2^0)$, and one of the associated connections given by the Proposition \ref{prop-connection}.  Suppose that this saddle connection starts at a vertex $a$.
We use the representation $F:D \to D$ given in Theorem \ref{thm-sq} and by a slight misuse of 
notation we will also denote the point in $D$ on this saddle connection by $a$, so 
$a =F^n_{\theta_1^0,\theta_2^0} (a)$. This point depends on $\theta_1$, 
in the proof $\theta_1$ is fixed, and $\theta_2$ varies, thus the identification of the vertex $a$ and
with a point in the diagonal remains valid throughout the proof.

Consider a lift $\tilde{F}: \R \to \R$ of $F$.  Then
there is an $m \in \Z$ such that 
$a + m  =\tilde{F}^n_{\theta_1^0,\theta_2^0} (a)$.
 The proposition follows immediately if we can apply
Kumagai's IFT to the function
$$f(\theta_1,\theta_2) := \tilde{F}^n_{\theta_1,\theta_2} (a) - (a+m).$$

The Proof of Theorem \ref{thm-sq}  shows that there is an interval $E$ such that for each $\theta_1$
the function $F^n(\theta_1,\cdot)$ is  a strictly monotonic map of $\theta_2 \in E$. This immediately implies that $F^n(\theta_1,\cdot)$ is a strictly monotonic 
map of $\theta_2 \in E$, and thus so is $\tilde{F}^n(\theta_1,\cdot)$.
Thus $f(\theta_1,\cdot)|_E$ is locally one to one and we can apply Kumagai's theorem.
\end{proof}

\section{Centrally symmetric domains.}\label{sec6}

\begin{proposition}
Suppose that $P$ is centrally symmetric, for example a circular or square table and suppose that $\theta_1,\theta_2$ are such that the rotation number of
$\S$ is irrational, then for any $x \in Q_i$ the $\omega$-limit set  $\omega(x) \subset Q_i$ is centrally symmetric
$(i=1,2)$.
\end{proposition}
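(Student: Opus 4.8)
The key structural fact is that central symmetry of the domain $P$ induces a symmetry of the chess billiard map. Let $\sigma : \R^2 \to \R^2$ be the point reflection through the center of symmetry of $P$; since $\sigma(P) = P$, it restricts to an involution of $\partial P$, hence of each $Q_i$. Because $\sigma$ is affine and maps each line of a foliation to a parallel line, it commutes with each involution $T_i$: for a chord $\{x, T_i(x)\}$ in direction $\theta_i$, its image is the chord $\{\sigma(x), \sigma(T_i(x))\}$, again in direction $\theta_i$, so $\sigma \circ T_i = T_i \circ \sigma$ (and the same for the degenerate and tangent cases, since $\sigma$ carries the center of a segment to the center of its image). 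Consequently $\sigma$ commutes with $T$, hence with $S = T^2$, as a map of $Q$. Restricting to one copy, I get a homeomorphism $\sigma|_{Q_i} : Q_i \to Q_i$ commuting with $S|_{Q_i}$. I should note $\sigma|_{Q_i}$ is orientation \emph{preserving}: it is a fixed-point-free involution of the circle $Q_i$ (it cannot fix a boundary point, as that point's antipode on $\partial P$ would coincide with it), and a fixed-point-free involution of the circle is conjugate to the antipodal rotation, hence orientation preserving.

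**Main step.** Now fix $i$ and suppose $\rho(S)$ is irrational. By Corollary~\ref{cor-rotation}, the $\omega$-limit set $\omega(x) \subset Q_i$ is independent of $x \in Q_i$; call it $\Lambda$. Pick any $x \in Q_i$. Since $\sigma|_{Q_i}$ commutes with $S|_{Q_i}$, we have $S^n(\sigma x) = \sigma(S^n x)$ for all $n$, and since $\sigma$ is a homeomorphism it maps the set of limit points of $\{S^n x\}$ onto the set of limit points of $\{S^n(\sigma x)\}$; that is, $\sigma(\omega(x)) = \omega(\sigma x)$. But $\sigma x \in Q_i$, so $\omega(\sigma x) = \Lambda$ by the independence statement. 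Therefore $\sigma(\Lambda) = \Lambda$, which is precisely the assertion that $\omega(x) = \Lambda$ is centrally symmetric. (For the circle one can also see this concretely: $\omega(x)$ is either all of $Q_i$ — trivially $\sigma$-invariant — or the unique minimal Cantor set, which must be $\sigma$-invariant since $\sigma$ maps it to another minimal set and minimal sets are unique in the irrational case; but the argument above via $\omega(\sigma x) = \omega(x)$ is cleaner and uniform.)

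**Expected obstacle.** There is essentially no analytic difficulty; the only thing needing care is the bookkeeping that $\sigma$ genuinely commutes with $T_i$ \emph{including} at the exceptional/tangent configurations and at the two $T_i$-fixed points, and that $\sigma$ preserves the chosen orientation on $Q_i$ so that it is legitimate to invoke Corollary~\ref{cor-rotation} (which is stated for orientation-preserving dynamics on $Q_i$) without the symmetry interfering. Once one records that $\sigma$ is a fixed-point-free involution of each circle $Q_i$ commuting with $S$, the conclusion $\sigma(\omega(x)) = \omega(\sigma(x)) = \omega(x)$ is immediate from Corollary~\ref{cor-rotation}. I would also remark that the same proof shows the unique invariant measure of $S|_{Q_i}$ (which exists by unique ergodicity in Corollary~\ref{cor-rotation}) is $\sigma$-invariant, since $\sigma_*\mu$ is again $S|_{Q_i}$-invariant and hence equals $\mu$.
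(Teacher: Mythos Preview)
Your proof is correct and follows essentially the same approach as the paper's. Both arguments rest on the two facts that (i) the central symmetry $\sigma$ commutes with $S$ (so $\sigma(\omega(x)) = \omega(\sigma x)$), and (ii) by Corollary~\ref{cor-rotation} the $\omega$-limit set is independent of the base point when $\rho(S)$ is irrational; combining these gives $\sigma(\omega(x)) = \omega(x)$. The paper is terser: it picks a specific antipodal pair $x^{\pm}$ lying on the chord through the center in direction $\theta_1$ and simply asserts that their $\omega$-limit sets are centrally symmetric to each other, leaving the commutation $\sigma \circ T_i = T_i \circ \sigma$ implicit, whereas you spell this out explicitly (and your added observations about $\sigma|_{Q_i}$ being orientation preserving and about the unique invariant measure being $\sigma$-invariant are correct but not needed for the statement).
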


\begin{proof}
Consider two points $x^{\pm} \in \partial P$ such that the vector ${x^- x^+}$ passes through the center of symmetry
of $P$ and is in the direction $\theta_1$.
The $\omega$-limit sets of these two points are centrally symmetric to each other, however
from Corollary \ref{cor-rotation} we have $\omega(x)$ does not depend on $x$.   
\end{proof}

\end{document}